\declaretheorem[style = plain, numberwithin = section]{theorem}
\declaretheorem[style = plain,      sibling = theorem]{corollary}
\declaretheorem[style = plain,      sibling = theorem]{lemma}
\declaretheorem[style = plain,      sibling = theorem]{proposition}
\declaretheorem[style = definition, sibling = theorem]{definition}
\crefname{observation}{Observation}{Observations}
\Crefname{observation}{Observation}{Observations}
\crefname{conjecture}{Conjecture}{Conjectures}
\Crefname{conjecture}{Conjecture}{Conjectures}
\crefname{notation}{Notation}{Notations}
\Crefname{notation}{Notation}{Notations}
\Crefname{claim}{Claim}{Claims}
\Crefname{question}{Question}{Questions}
\newcommand{\CY}{Calabi--\kern-0.37exYau\xspace}
\newcommand{\eg}{\leavevmode\unskip, e.g.,\xspace}
\newcommand{\Z}{\mathbb{Z}}    
\newcommand{\Q}{\mathbb{Q}}    
\newcommand{\C}{\mathbb{C}}    
\newcommand{\F}{\mathbb{F}}    
\newcommand{\A}{\mathbb{A}}    
\renewcommand{\P}{\mathbb{P}}  
\DeclarePairedDelimiter{\set}{\lbrace}{\rbrace}        
\DeclarePairedDelimiter{\abs}{\lvert}{\rvert}          
\newcommand{\from}{\colon}
\newcommand{\sO}{\mathscr{O}}
\newcommand\restr[2]{{
  \left.\kern-\nulldelimiterspace 
  #1 
  \vphantom{\big|} 
  \right|_{#2} 
}}
\DeclareMathOperator{\CH}{CH}
\DeclareMathOperator{\Spec}{Spec}
\DeclareMathOperator{\chr}{char}
\DeclareMathOperator{\trdeg}{trdeg}
\titleformat*{\section}{\large\bfseries}
\titleformat*{\subsection}{\bfseries}
\titleformat*{\subsubsection}{\small\bfseries}
\title{\vspace{-3.5cm}\Large{\bfseries{A Stably Irrational (2,3)-Complete Intersection Fourfold over $\Q$}}} 
\author{Bjørn Skauli}
\date{}
\begin{document}
\maketitle

\begin{abstract}
We apply the specialization technique based on the decomposition of the diagonal from \cite{Tot16} and \cite{SchHypersurface} to find an explicit example over $\Q$ of a quadric and cubic hypersurface in $\P^6$ such that their intersection is a smooth stably irrational fourfold. Using the same degeneration, Nicaise and Ottem (\cite{NO20}) have already proven that the the very general complete intersection of this type is stably irrational using the motivic volume.
\end{abstract}

\section{Introduction}
Determining which varieties are birational to projective space is a central problem in birational geometry. In dimension 1 and 2 the problem was solved by Lüroth and Castelnuovo respectively, but in higher dimensions the rationality problem has proven to be harder. In the study of the rationality problem, several weaker notions of rationality have been introduced. Perhaps the most important is stable rationality, where a variety $X$ is stably rational if $X \times \P^n$ is birational to $\P^m$ for some $n,m \geq 0$. In \cite{Voi15} Voisin introduced a degeneration technique based on the decomposition of the diagonal to prove stable irrationality of the very general quartic double solid. Since then such degeneration techniques have been a powerful tool used to prove stable irrationality of many classes of varieties. In work by Totaro \cite{Tot16} and Schreieder \cite{SchHypersurface}, the technique has been used to even find explicit examples over $\Q$ of stably irrational hypersurfaces.

 A different degeneration technique was introduced by Nicaise and Schinder in \cite{NS19}, based on the motivic volume. In \cite{NO20}, Nicaise and Ottem use this technique to prove that the very general complete intersection of a cubic and a quadric in $\P^6$ is stably irrational. However, the stable birational volume technique is less suited to finding explicit examples. The goal of this paper is to use techniques introduced by Totaro and Schreieder in \cite{Tot16} and \cite{SchHypersurface} to find an example of a $(2,3)$-complete intersection in $\P^6$ with integer coefficients that is not stably rational.
 
Specifically, we will prove the following result:
\begin{theorem}
	\label{thm:intro}
	Let $K=\Q$ or $K=\F_p(t)$. In the first case let $p,q \geq 5$ be distinct primes and set $u=p,v=q$, and in the second case let $u=t,v=(t-1)$. Let $X \subset \P^6_K$ be the complete intersection defined by the following two equations:
	\begin{equation}
		\label{eq:quadricIntro}
		\ u(\sum_{i=0}^6 x_i^2) + v(x_3x_6-x_4x_5)=0
	\end{equation}
	
	\begin{equation}
			 u(\sum_{i=0}^6 x_i^3) + v(x_0^2x_5 + x_1^2x_4 + x_2^2x_6 + x_3(x_5^2+x_4^2+x_3^2 -2x_3(x_6 + x_5 + x_4)))=0.
	\end{equation}
Then $X$ is a smooth complete intersection that is not geometrically stably rational.
\end{theorem}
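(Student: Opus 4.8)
The plan is to apply the specialization method for the decomposition of the diagonal in the explicit, integral form developed in \cite{Tot16} and \cite{SchHypersurface}. Recall that a smooth proper variety which is stably rational admits a decomposition of the diagonal, and that this property specializes: if the smooth generic fibre $X$ of a flat proper family over a discrete valuation ring admitted such a decomposition, then so would the special fibre, provided the latter has a resolution all of whose fibres are universally $\CH_0$-trivial. For $K=\Q$ I would take $R=\Z_{(p)}$ and view $X$ as the generic fibre of the family over $\Spec R$ cut out by the two equations, the closed fibre being the reduction modulo $u=p$; for $K=\F_p(t)$ I would take $R=\F_p[t]_{(t)}$ and degenerate $t\to 0$. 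In both cases $v$ is a unit and the $u$-terms vanish in the residue field $\F_p$, so the special fibre is the same variety
\[ X_0 = \{\, x_3 x_6 - x_4 x_5 = 0 \,\} \cap \{\, C_2 = 0 \,\} \subset \P^6_{\F_p}, \]
where $C_2 = x_0^2 x_5 + x_1^2 x_4 + x_2^2 x_6 + D$ and $D = x_3(x_5^2+x_4^2+x_3^2-2x_3(x_6+x_5+x_4))$. The whole problem thus reduces to producing, over $\F_p$ with $p\geq 5$, a universally $\CH_0$-trivial resolution of $X_0$ carrying a nonzero unramified cohomology class; the Fermat-type $u$-parts $\sum x_i^2,\ \sum x_i^3$ never enter $X_0$ and serve only to make $X$ and the total space smooth.

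The key geometric observation is that $X_0$ is birational to a quadric surface bundle. The quadric $x_3x_6-x_4x_5$ has rank $4$ and is singular exactly along the plane $\Lambda=\{x_3=x_4=x_5=x_6=0\}\cong\P^2_{(x_0:x_1:x_2)}$, and projection from $\Lambda$ maps $\{x_3x_6=x_4x_5\}$ onto the smooth quadric surface $S=\{x_3x_6=x_4x_5\}\subset\P^3_{(x_3:x_4:x_5:x_6)}\cong\P^1\times\P^1$. Parametrising the fibre of this projection over a point of $S$ by $(x_0:x_1:x_2:\lambda)$ and substituting $x_i=\lambda x_i$ for $i\geq 3$, the cubic $C_2$ factors as $\lambda\,(x_5 x_0^2+x_4 x_1^2+x_6 x_2^2+D\,\lambda^2)$. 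Hence, after blowing up $\Lambda$, the variety $X_0$ becomes a quadric surface bundle $\mathcal Q\to S$ with fibre the diagonal quadric $\langle x_5, x_4, x_6, D\rangle$ in $\P^3_{(x_0:x_1:x_2:\lambda)}$. Its discriminant is $x_3x_4x_5x_6\cdot G$ with $G=x_5^2+x_4^2+x_3^2-2x_3(x_6+x_5+x_4)$; using $x_3x_6=x_4x_5$ on $S$ one has $x_3x_4x_5x_6=(x_4x_5)^2$, so modulo squares the discriminant equals $G|_S=(x_3-x_4-x_5)^2-4x_4x_5$, and the discriminant cover of $S$ is a nontrivial double cover.

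The heart of the argument, and the step I expect to be the main obstacle, is to show that the even Clifford invariant of $\mathcal Q\to S$ yields a nonzero class in the unramified Brauer group (equivalently $H^2_{\mathrm{nr}}(-,\Z/2)$) of a smooth model of $X_0$, and that this class remains nonzero after base change to $\overline{\F_p}$. Following the standard strategy for quadric surface bundles, I would attach to $\langle x_5,x_4,x_6,D\rangle$ its quaternion class over the discriminant double cover and compute the residues along every codimension-one component of the degeneration locus on $S$, namely (the intersections with $S$ of) $\{x_3=0\},\{x_4=0\},\{x_5=0\},\{x_6=0\}$ and the conic $\{G|_S=0\}$, together with the loci arising over $\Lambda$ after the blow-up. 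The coefficients in $D$ are engineered so that all of these residues vanish, making the class unramified, while the class itself stays nonzero; carrying out the delicate residue cancellations, and certifying nonvanishing by exhibiting a single place where the local symbol is nontrivial, is the technical crux and is exactly where the explicit form of $C_2$ is essential.

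Finally I would verify that the resolution just described is universally $\CH_0$-trivial. The only degenerate fibres of $\mathcal Q\to S$ lie over the discriminant, where the quadric becomes a cone or a pair of planes, and over the exceptional divisor of the blow-up of $\Lambda$; one must check that the discriminant has at worst nodal singularities and that no fibre degenerates to a quadric of rank $\leq 2$ along a divisor, so that every fibre of the resolution is rational and hence universally $\CH_0$-trivial — again a condition controlled by the explicit $C_2$. Granting this, the resolution transports the nonzero unramified class into an obstruction showing $X_0$ admits no universally $\CH_0$-trivial decomposition of the diagonal, and the specialization theorem of \cite{Tot16} and \cite{SchHypersurface} then forces the smooth generic fibre $X$ to admit no decomposition of the diagonal over $\overline K$, so $X$ is not geometrically stably rational. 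Smoothness of $X$ is a separate Jacobian-criterion check, for which the hypotheses on $u$ and $v$ (distinct $p,q\geq 5$, respectively $u=t,\ v=t-1$ in characteristic $p\neq 2,3$) are precisely what is required.
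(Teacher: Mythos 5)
Your overall strategy coincides with the paper's: the same degeneration (reduce modulo $u$), the same special fibre, and the same key observation that after blowing up the vertex plane $\Lambda$ of the rank-$4$ quadric the special fibre becomes a quadric surface bundle over $\P^1\times\P^1$ whose discriminant is $G$ modulo squares. But there are two genuine gaps in the execution. First, the step you yourself flag as the crux --- producing the nonzero unramified class and checking all the residues --- is precisely where the construction is rigged so that the bundle is \emph{birationally the Hassett--Pirutka--Tschinkel quadric bundle} $yzs^2+xzt^2+xyu^2+F(x,y,z)v^2$ over $\P^2$ (set $y_0=z_0=1$ in one equation and $z=1$ in the other); the nontriviality of $\alpha\in H_{nr}^2$ is then a citation to \cite{HPT19}, extended to algebraically closed fields of characteristic $\neq 2$ in \cite[Proposition 9.6]{SchSurvey}. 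Without this identification your proposal leaves the central nonvanishing unproven.

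Second, your specialization framework (a universally $\CH_0$-trivial resolution of the special fibre) is problematic here: the special fibre is a fourfold over $\overline{\F_p}$, where resolution of singularities is not known, and your proposed sufficient condition --- that ``no fibre degenerates to a quadric of rank $\leq 2$ along a divisor'' --- is in fact violated: along each ruling contained in $\{x_i=0\}\cap S$, $i=3,\dots,6$, the diagonal form $\langle x_5,x_4,x_6,D\rangle$ drops to rank $2$, and this codimension-one degeneration is exactly what allows $\alpha$ to be nonzero. The paper sidesteps both problems: it specializes the decomposition of the diagonal itself via Fulton's specialization map over the DVR, pulls the resulting zero-cycle identity back to the proper transform $X_1$ (picking up an error term supported on $Z=X_1\cap X_2$, the intersection with the exceptional component $T=0$), passes to an \emph{odd-degree alteration} $\tau\from X_1'\to X_1$ (Gabber/Temkin) instead of a resolution, and kills the error terms by pairing with $\tau^*\alpha$ under the Merkurjev pairing: the term on $Z$ dies because $Z$ is rational (so the restricted class, being unramified, vanishes), and the term on the singular locus dies by Schreieder's Theorem 10.1, which forces $\tau^*\alpha$ to vanish on any subvariety not dominating the base. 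To complete your argument you would need either to import this machinery or to carry out a genuinely delicate $\CH_0$-triviality analysis of an explicit resolution in characteristic $p$; as written, neither is done.
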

The unifying property of the two choices for $K$ is that varieties over $K$ can be specialized to varieties over $\F_p$.

In \cref{sec:Prelim} we collect the important definitions and results we will use. Then in \cref{sec:Example} we will prove that the complete intersection in \cref{thm:intro} is geometrically stably irrational. To do this we specialize the complete intersection to the union of two components, such that one component is birational to the stably irrational quadric bundle found in \cite{HPT19} and the intersection of the components is rational. This is the same specialization as the one used in \cite{NO20}

\section{Rationality and specialization}
\label{sec:Prelim}
\subsection{Unramified cohomology}
Unramified cohomology groups are subgroups of the \'etale cohomology groups. If $X$ is a scheme and $F$ a sheaf on $X$ in the \'etale topology of $X$, we denote the $i$-th \'etale cohomology group by $H^i(X,F)$. If $R$ is a ring we will use $H^i(R,F)$ as a shorthand for $H^i(\Spec R,F)$.

We refer to \cite{SchSurvey} for an introduction to unramified cohomology. Following \cite{SchSurvey} and \cite{Mer08} we define unramified cohomology using only geometric valuations:

\begin{definition}{{\cite[Definition 4.3]{SchSurvey}}}
  Let $K/k$ be a finitely generated field extension and let $m$ be a positive integer that is invertible in $k$. We define the unramified cohomology of $K$ over $k$ with coefficients on $\mu_m^{\otimes j}$ as the subgroup
  \[H_{nr}^i(K/k,\mu_m^{\otimes j}) \subset H^i(K,\mu_m^{\otimes j}) \]
consisting of all elements $\alpha \in H^i(K,\mu_m^{\otimes j})$ such that for any geometric valuation $\nu$ on $K$ over $k$ we have $\partial_\nu (\alpha) = 0$.
\end{definition}

\begin{definition}{{\cite[Definition 4.1]{SchSurvey}}}
	Let $K/k$ be a finitely generated field extension. A \emph{geometric valuation} $\nu$ on $K$ over $k$ is a discrete valuation on $K$ over $k$ such that the transcendence degree of $\kappa_\nu$, the field of fractions of the corresponding DVR, over $k$ is given by
	\[\trdeg_k(\kappa_\nu) = \trdeg(K)-1\]
\end{definition}
The main reason for using geometric valuations is that it gives better functorial properties. Importantly, if $K'/K$ is a field extension over a field $k$, the corresponding pullback maps on \'etale cohomology restrict to a pullback map on unramified cohomology. If $K'/K$ is a finite extension, the pushforward in \'etale cohomology gives a pushforward on unramified cohomology groups.

We can define restrictions of unramified cohomology classes to scheme points. To do this, we need the so-called injectivity and codimension 1 purity properties for \'etale cohomology, which are consequences of Bloch-Ogus' proof of the Gersten conjecture (\cite{BO74}). See \cite[Theorem 3.6]{SchSurvey} or \cite[Theorems 3.81. and 3.8.2]{CT95}

\begin{theorem}
  \label{thm:EtaleLocGlob}
  Let $X$ be a variety over a field $k$ and let $m$ be a positive integer that is invertible in $k$. Let $x$ be a point in the smooth locus of $X$. Then the following holds:
  \begin{enumerate}[i)]
  \item The natural morphism
    \begin{equation}
      \label{eq:3.7}
          H^i(\sO_{X,x},\mu_m^{\otimes j}) \to H^i(k(X),\mu_m^{\otimes j})
    \end{equation}
    is injective
  \item A class $\alpha \in H^i(k(X),\mu_m^{\otimes j} )$ lies in the image of \eqref{eq:3.7} if and only if $\alpha$ has trivial residue along each prime divisor on $X$ that passes through $x$.
  \end{enumerate}
\end{theorem}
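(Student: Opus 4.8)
The plan is to reduce both assertions to a single input, the Gersten resolution for étale cohomology due to Bloch--Ogus \cite{BO74}. Since $x$ lies in the smooth locus, the local ring $A = \sO_{X,x}$ is regular, its fraction field is $k(X)$, and its height-one primes are exactly the prime divisors of $X$ passing through $x$; so everything can be phrased in terms of the Gersten complex attached to $A$ with $\mu_m^{\otimes j}$-coefficients.

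First I would recall how that complex arises. Filtering $\Spec A$ by codimension of support produces the coniveau spectral sequence, whose first-page row is
\[
  0 \to H^i(A,\mu_m^{\otimes j}) \to H^i(k(X),\mu_m^{\otimes j}) \xrightarrow{\ \bigoplus_\nu \partial_\nu\ } \bigoplus_{\nu} H^{i-1}(\kappa_\nu,\mu_m^{\otimes j-1}) \to \cdots,
\]
with $\nu$ ranging over the height-one primes of $A$, $\kappa_\nu$ the associated residue field, and $\partial_\nu$ the residue map. The input I would invoke is that this complex is exact: equivalently, the corresponding complex of Zariski sheaves on a smooth variety is a flasque resolution of the cohomology sheaf $\mathcal{H}^i(\mu_m^{\otimes j})$, and because Zariski cohomology vanishes on the local scheme $\Spec A$, taking sections over $\Spec A$ recovers the displayed sequence. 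This exactness is exactly the Gersten conjecture in the case settled by Bloch and Ogus.

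Granting it, the two parts are formal. Exactness at $H^i(A,\mu_m^{\otimes j})$ is the injectivity of \eqref{eq:3.7}, which is i). Exactness at $H^i(k(X),\mu_m^{\otimes j})$ identifies the image of \eqref{eq:3.7} with the kernel of $\bigoplus_\nu \partial_\nu$; since the $\nu$ are precisely the prime divisors through $x$, this says that $\alpha$ lies in the image if and only if all of its residues $\partial_\nu(\alpha)$ vanish, which is ii).

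The hard part will be the exactness, and this is where the hypotheses are genuinely used. Following Bloch--Ogus, I would prove the required effaceability of the cohomology presheaves by reducing it, through a geometric presentation lemma of Quillen type --- one that places a given closed subset so as to be finite over an affine space --- to homotopy invariance and absolute cohomological purity for étale cohomology; both inputs require $m$ to be invertible in $k$, and regularity of $A$ (equivalently, smoothness of $x$) is what makes the presentation applicable. I expect this geometric reduction to be the only real obstacle. In practice I would cite it rather than reprove it, as in \cite[Theorem 3.6]{SchSurvey} and \cite[Theorems 3.8.1 and 3.8.2]{CT95}; for regular local rings not assumed essentially smooth over a field, the same conclusion follows from later work of Gabber and of Panin.
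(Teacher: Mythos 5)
The paper offers no proof of this statement, simply citing it as a consequence of Bloch--Ogus (via \cite[Theorem 3.6]{SchSurvey} and \cite[Theorems 3.8.1 and 3.8.2]{CT95}), and your sketch is exactly the standard derivation behind those citations: identify the height-one primes of the regular local ring $\sO_{X,x}$ with the prime divisors through $x$, and read off both i) and ii) as exactness of the Gersten complex at its first two terms, with the genuinely hard effaceability input deferred to the literature. This is correct and takes the same route as the paper's references.
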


We can now define the restriction of an unramified cohomology class. The definition here is stated slightly more generally than \cite[Proposition 4.8]{SchSurvey} but with the same proof.

\begin{proposition}
	\label{prop:UnramifiedRestriction}
  Let $X$ be a variety over a field $k$ and let $m$ be a positive integer that is invertible in $k$. Let $\alpha \in H_{nr}^i(k(X)/k, \mu_m^{\otimes j})$.
  \begin{enumerate}[i)]
  \item
   Let $x$ be a scheme point in the smooth locus of $X$. Then there is a well-defined restriction
    \[\restr{\alpha}{x} \in H^i(\kappa(x),\mu_m^{\otimes j}). \]
  \item
    If $X$ is also smooth and proper over $k$, then $\restr{\alpha}{x} \in H^i(\kappa(x),\mu_m^{\otimes j})$ is unramified over $k$.
  \end{enumerate}
\end{proposition}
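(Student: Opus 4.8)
The plan is to deduce both parts from \cref{thm:EtaleLocGlob}. For part i), first observe that every prime divisor $D$ on $X$ defines a geometric valuation $\nu_D$ on $k(X)$ over $k$: its valuation ring has residue field $k(D)$, and $\trdeg_k k(D) = \dim D = \dim X - 1 = \trdeg(k(X)) - 1$. Hence the hypothesis that $\alpha$ is unramified forces $\partial_{\nu_D}(\alpha) = 0$ for every prime divisor $D$, in particular for those passing through $x$ (whose local rings are regular, hence discrete valuation rings, since $x$ lies in the smooth locus). By \cref{thm:EtaleLocGlob} ii) this means $\alpha$ lies in the image of $H^i(\sO_{X,x},\mu_m^{\otimes j}) \to H^i(k(X),\mu_m^{\otimes j})$, and \cref{thm:EtaleLocGlob} i) shows this map is injective. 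I would therefore let $\tilde\alpha \in H^i(\sO_{X,x},\mu_m^{\otimes j})$ be the unique preimage of $\alpha$ and define $\restr{\alpha}{x}$ as the pullback of $\tilde\alpha$ along the closed immersion $\Spec \kappa(x) \incl \Spec \sO_{X,x}$. Uniqueness of $\tilde\alpha$ makes this well defined.

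For part ii), fix a geometric valuation $\nu$ on $\kappa(x)$ over $k$; I must show $\partial_\nu(\restr{\alpha}{x}) = 0$. Let $B = \sO_\nu \subset \kappa(x)$ be its valuation ring, a discrete valuation ring with fraction field $\kappa(x)$, residue field $\kappa_\nu$, and $k \subset B$. The morphism $x \colon \Spec \kappa(x) \to X$ together with the structure morphism $\Spec B \to \Spec k$ satisfies the hypotheses of the valuative criterion of properness, so, $X$ being proper over $k$, it extends uniquely to a morphism $\Spec B \to X$. Let $z \in X$ be the image of the closed point; then $x$ specializes to $z$, and the extension induces a local homomorphism $\sO_{X,z} \to B$ compatible with the two inclusions into $\kappa(x) = \mathrm{Frac}(B)$. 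Because $X$ is smooth over $k$, the point $z$ again lies in the smooth locus.

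The heart of the argument is then to exhibit $\restr{\alpha}{x}$ as a class coming from $H^i(B,\mu_m^{\otimes j})$. Applying part i) at the point $z$ produces a lift $\tilde\alpha_z \in H^i(\sO_{X,z},\mu_m^{\otimes j})$ of $\alpha$, and by the injectivity in \cref{thm:EtaleLocGlob} i) its image in $H^i(\sO_{X,x},\mu_m^{\otimes j})$ agrees with $\tilde\alpha$; consequently $\restr{\alpha}{x}$ is the image of $\tilde\alpha_z$ under $H^i(\sO_{X,z},\mu_m^{\otimes j}) \to H^i(\kappa(x),\mu_m^{\otimes j})$. Since the local homomorphism $\sO_{X,z} \to B$ factors this restriction through $H^i(B,\mu_m^{\otimes j}) \to H^i(\kappa(x),\mu_m^{\otimes j})$, the class $\restr{\alpha}{x}$ lies in the image of the latter. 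By the codimension-one purity property for the discrete valuation ring $\sO_\nu$ — the local input behind \cref{thm:EtaleLocGlob} — every class in the image of $H^i(\sO_\nu,\mu_m^{\otimes j}) \to H^i(\kappa(x),\mu_m^{\otimes j})$ has trivial residue $\partial_\nu$, whence $\partial_\nu(\restr{\alpha}{x}) = 0$, as desired.

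The step I expect to require the most care is the compatibility in the last paragraph: checking that the two a priori different restrictions of $\alpha$ to $\kappa(x)$, one through $\sO_{X,x}$ and one through $\sO_{X,z}$ and $B$, really coincide, which is exactly where injectivity is used, and verifying that $\partial_\nu$ is precisely the boundary map whose kernel is the image of $H^i(B,\mu_m^{\otimes j})$. One should also confirm that geometric valuations are discrete, so that the valuative criterion applies with a discrete valuation ring, and that $\kappa(x)$ is finitely generated over $k$, both of which follow from $X$ being a variety over $k$.
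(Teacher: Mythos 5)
Your proof is correct and follows essentially the same route as the paper, which omits the argument and defers to \cite[Proposition 4.8]{SchSurvey}: part i) via the injectivity and codimension-one purity statements of \cref{thm:EtaleLocGlob} applied to the divisorial (geometric) valuations through $x$, and part ii) via the valuative criterion of properness, factoring the restriction through the local ring of the specialization $z$ and then through the valuation ring $\sO_\nu$. The compatibility check between the lifts over $\sO_{X,x}$ and $\sO_{X,z}$, which you rightly flag as the delicate point, is handled exactly as you propose, by the injectivity in \cref{thm:EtaleLocGlob}.
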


\subsection{Decomposition of the Diagonal}
The decomposition of the diagonal technique was introduced in \cite{BS83}, and its use in answering questions of stable rationality developed by among others \cite{Voi15}, \cite{CTP16} \cite{Tot16}, \cite{SchHypersurface}, \cite{SchTorsion}.

\begin{definition}
We say a scheme of pure dimension $n$ over a field $k$ admits a \emph{decomposition of the diagonal} if we have an equality:
\[\Delta_X = X \times z  + Z_X \in CH_n(X \times X) \]
where $Z_X$ is a cycle supported on $D \times X$ for some divisor $D \subset X$ and $z \in Z_0(X)$ is a zero-cycle on $X$.
\end{definition}
It will often be convenient to look at decompositions of the diagonal in the following way:
\begin{lemma}{{\cite[Lemma 7.3]{SchSurvey}}}
	A variety $X$ over a field $k$ admits a decomposition of the diagonal if and only if there is a $0$-cycle $z \in Z_0(X)$ such that:
	\[[\delta_X] = [z \times {k(X)}] \in CH_0(X \times {k(X))}\]
	where we write $z \times k(X)$ for the base change of $z$ to $X \times_k \Spec k(X)$.
\end{lemma}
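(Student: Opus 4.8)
The plan is to prove both implications by restricting cycles on $X \times X$ to the generic fiber of the first projection $p_1 \from X \times X \to X$, whose generic fiber over $\eta = \Spec k(X)$ is exactly $X_{k(X)} := X \times_k \Spec k(X)$. Under this restriction the diagonal $\Delta_X$, which maps isomorphically onto $X$ via $p_1$, goes to its fiber over $\eta$, namely the tautological $k(X)$-point of $X_{k(X)}$ determined by $\Spec k(X) \incl X$, which is precisely the class $[\delta_X]$. The technical backbone is the continuity of Chow groups under the localization sequence: for the integral $k$-scheme $X$ of dimension $n$, taking the limit of the localization exact sequences over nonempty open $U \subseteq X$ (as in Fulton's \emph{Intersection Theory}) yields a canonical identification
\[ \CH_0(X_{k(X)}) \;\cong\; \varinjlim_{\emptyset \neq U \subseteq X} \CH_n(U \times X), \]
under which $\Delta_X|_{U \times X} = \Delta_U$ and $(X \times z)|_{U \times X} = U \times z$ are the evident lifts of $[\delta_X]$ and $[z \times k(X)]$.

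For the forward implication I would start from a decomposition $\Delta_X = X \times z + Z_X$ with $Z_X$ supported on $D \times X$ for a divisor $D \subset X$, and simply restrict this identity to $X_{k(X)}$. The first two terms restrict to $[\delta_X]$ and $[z \times k(X)]$ as above. The crucial observation is that $Z_X$ restricts to $0$: its support lies over the proper closed subset $D \subset X$, which does not contain the generic point $\eta$, so the fiber of $Z_X$ over $\eta$ is empty. This yields $[\delta_X] = [z \times k(X)]$ in $\CH_0(X_{k(X)})$.

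For the converse I would run the argument in reverse. Given $[\delta_X] = [z \times k(X)]$ in $\CH_0(X_{k(X)})$, the colimit description produces a single nonempty open $U \subseteq X$ on which the lifts already agree, i.e.\ $[\Delta_U] = [U \times z]$ in $\CH_n(U \times X)$. Feeding this back into the localization sequence
\[ \CH_n\bigl((X \setminus U) \times X\bigr) \xrightarrow{\;i_*\;} \CH_n(X \times X) \xrightarrow{\;j^*\;} \CH_n(U \times X) \to 0, \]
the class $\Delta_X - X \times z$ dies in $\CH_n(U \times X)$, hence equals $i_* Z_X$ for some cycle $Z_X$ supported on $(X \setminus U) \times X$. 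Rearranging gives the desired $\Delta_X = X \times z + Z_X$.

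The main obstacle I anticipate is not the formal manipulation but the bookkeeping around the support of the error term: I must ensure $Z_X$ is supported on $D \times X$ for an honest divisor. Here I would use that $X \setminus U$ is a proper closed subset of the integral variety $X$, hence contained in an effective divisor $D$ (locally, any nonzero regular function vanishing on $X \setminus U$ cuts out such a $D$ by Krull's principal ideal theorem); since the components of $Z_X$ project into $X \setminus U \subseteq D$ under $p_1$, they lie in $D \times X$. The second point requiring care is the precise identification $\Delta_X|_{X_{k(X)}} = [\delta_X]$ together with the compatibility of the flat restriction maps with the colimit, which I would justify via the continuity of Chow groups under filtered limits of the base.
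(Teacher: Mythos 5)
Your proof is correct and follows essentially the same route as the paper, which justifies the lemma precisely by the natural isomorphism $\varinjlim_{\emptyset \neq U \subset X} \CH_n(U \times_k X) \simeq \CH_0(X \times k(X))$; you have simply spelled out the localization-sequence argument behind that isomorphism in both directions. Your closing remark about enlarging $X \setminus U$ to an honest divisor is the right bookkeeping point and is handled adequately.
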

The equivalence follows from the natural isomorphism
\[\varinjlim_{\emptyset \neq U \subset X} CH_n(U \times_k X) \simeq CH_0(X \times {k(X)})\]

The following lemma relates decompositions of the diagonal to stable rationality:
\begin{lemma}{{\textrm{(See \eg \cite[Lemma 2.4]{SchHypersurface})}}}
	A variety $X$ over a field $k$ that is stably rational admits a decomposition of the diagonal.
\end{lemma}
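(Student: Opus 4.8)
The plan is to deduce the lemma from two standard facts: that $\P^N$ trivially admits a decomposition of the diagonal, and that admitting one is a \emph{stable birational invariant}. Since the varieties we study are smooth and projective, I will prove the statement for $X$ smooth and proper, which is both the case we need and the setting in which the Chow-theoretic tools below are available. It is convenient to reformulate the previous lemma in ``universal'' form: setting $A_0(X_F) \coloneqq \ker\p{\deg \from CH_0(X_F) \to \Z}$, I claim that if $A_0(X_F) = 0$ for every field extension $F/k$, then $X$ admits a decomposition of the diagonal. Indeed, taking $F = k(X)$, the class $[\delta_X]$ of the generic point has degree $1$; choosing any $0$-cycle $z$ on $X$ of degree $1$ (one exists because a stably rational variety over the infinite field $k$ has a rational point), the difference $[\delta_X] - [z \times k(X)]$ lies in $A_0(X_{k(X)}) = 0$, so $[\delta_X] = [z \times k(X)]$ and the previous lemma supplies a decomposition.

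With this reformulation, I would use three ingredients. First, for projective space the degree map gives $CH_0(\P^N_F) \cong \Z$ for every field $F$, so $A_0(\P^N_F) = 0$. Second, passing to a product with $\P^n$ changes nothing: the projective bundle formula identifies $CH_0((X \times \P^n)_F)$ with $CH_0(X_F)$ compatibly with degrees, hence $A_0((X \times \P^n)_F) \cong A_0(X_F)$ for all $F$. Third, $A_0$ is a birational invariant of smooth proper varieties: if $X$ and $Y$ are smooth, proper and birational, then $A_0(X_F) \cong A_0(Y_F)$ for every $F$.

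Granting these, the conclusion is immediate. As $X$ is stably rational, $X \times \P^n$ is birational to some $\P^m$, and both are smooth and proper. The first ingredient gives $A_0(\P^m_F) = 0$ for all $F$, the third transports this to $A_0((X \times \P^n)_F) = 0$, the second descends it to $A_0(X_F) = 0$, and the reformulation then produces the desired decomposition of the diagonal for $X$.

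The main obstacle is the birational-invariance step, which I would establish through the action of correspondences on $CH_0$. For a birational map between smooth proper varieties $X$ and $Y$, the closure of its graph is a correspondence $\Gamma \in CH_{\dim X}(X \times Y)$ which, together with its transpose $\Gamma^t$, induces maps $\Gamma_* \from CH_0(X_F) \to CH_0(Y_F)$ and $\Gamma^t_* \from CH_0(Y_F) \to CH_0(X_F)$ that are functorial in $F$ and preserve degree. The compositions $\Gamma^t \circ \Gamma - \Delta_X$ and $\Gamma \circ \Gamma^t - \Delta_Y$ are represented by cycles supported over the proper closed loci where the map, respectively its inverse, fails to be an isomorphism; such correspondences do not dominate the source and therefore act as zero on $A_0$. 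Hence $\Gamma_*$ and $\Gamma^t_*$ restrict to mutually inverse isomorphisms on $A_0$. Here smoothness is what makes the intersection-theoretic composition of correspondences available and properness is what makes the pushforwards defined, so both hypotheses are genuinely used; alternatively one may simply cite the classical birational invariance of $A_0$ for smooth proper varieties.
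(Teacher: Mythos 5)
The paper never proves this lemma itself; it is stated with a pointer to \cite[Lemma 2.4]{SchHypersurface}, so there is no in-paper argument to compare against. Your proof is the standard route and, I believe, essentially the one taken in the cited source: reformulate the conclusion as universal triviality of $A_0$, check it for $\P^m$, transport it across products with $\P^n$ via the projective bundle formula (correctly noting that only the $CH_0(X_F)$ summand survives in dimension zero), and across birational maps of smooth proper varieties via the graph correspondence. The restriction to $X$ smooth and proper is harmless here, since the paper only invokes the lemma for a smooth complete intersection over $\overline{K}$.

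Two points should be tightened. First, the existence of a degree-one zero-cycle on $X$ should not rest on ``$k$ is infinite'': the lemma is stated over an arbitrary field, and over a finite field you should instead invoke the Lang--Nishimura theorem, or simply observe that $\Gamma^t_*$ applied to a rational point of $\P^m$ already yields a degree-one zero-cycle on $X\times\P^n$, since the graph correspondence preserves degrees. Second, the claim that a correspondence not dominating the source acts as zero on $A_0$ (in fact on all of $CH_0$) is not formal: it uses the easy moving lemma for zero-cycles, i.e.\ that on a smooth quasi-projective variety every zero-cycle is rationally equivalent to one supported off any prescribed nowhere dense closed subset. One should also note that \emph{all} of the error terms in $\Gamma^t\circ\Gamma-\Delta_X$ can be taken supported on $(X\setminus U)\times X$, where $U$ is the locus on which the birational map restricts to an isomorphism, so that the moving lemma on the first factor suffices; components supported only over a proper closed subset of the \emph{second} factor would not obviously act by zero. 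With these two repairs (or by citing the classical birational invariance of $A_0$ for smooth proper varieties outright, as you suggest), the argument is complete and correct.
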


\subsection{The Merkurjev Pairing}
We will use the Merkurjev pairing introduced in \cite[Section 2.4]{Mer08} to detect whether a smooth variety has a decomposition of the diagonal.

\begin{proposition}
	\label{prop:MerkurjevPairing}
	Let $X$ be a smooth proper variety over a field $K$ (not necessarily algebraically closed) and let $m$ be an integer invertible in $K$. Then there is a bilinear pairing:
	\[CH_0(X) \times H_{nr}^i(k(X)/K,\mu_m^{\otimes j}) \to H^i(K,\mu_m^{\otimes j})\]
	which we will write as $(z,\alpha) \to \langle z,\alpha \rangle$. For a closed point $z$ the pairing is given by:
	\[\langle z,\alpha \rangle = i_*(\restr{\alpha}{Z}) \in H^i(K,\mu_m^{\otimes j})\]
	for $i \from \Spec k(z) \to X$.
\end{proposition}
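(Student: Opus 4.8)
The plan is to define the pairing on closed points by the stated formula, extend it by linearity to all of $Z_0(X)$, and then show it descends through rational equivalence---the descent being the only real content. For a closed point $z$ the residue field $\kappa(z)$ is a finite extension of $K$, and since $X$ is smooth every point lies in the smooth locus, so \cref{prop:UnramifiedRestriction} gives a well-defined class $\restr{\alpha}{z} \in H^i(\kappa(z),\mu_m^{\otimes j})$. Composing with the pushforward $(i_z)_* \from H^i(\kappa(z),\mu_m^{\otimes j}) \to H^i(K,\mu_m^{\otimes j})$ attached to the finite extension $\kappa(z)/K$ yields $\langle z,\alpha\rangle$. Extending $\Z$-linearly over $Z_0(X)$ produces a pairing that is additive in $z$ by construction and additive in $\alpha$ because both restriction and pushforward are homomorphisms; note also that the target $H^i(K,\mu_m^{\otimes j})$ is exactly $H^i_{nr}(K/K,\mu_m^{\otimes j})$, since $K/K$ admits no geometric valuations. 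It therefore remains to show that $\langle z,\alpha\rangle = 0$ whenever $z \in Z_0(X)$ is rationally equivalent to zero.

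Such a $z$ is a sum of cycles of the form $\nu_*(\div_{\tilde{C}}(f))$, where $C \subset X$ is an integral curve, $\nu \from \tilde{C} \to X$ is the composite of its normalization with the inclusion, and $f \in k(\tilde{C})^\times$; so it suffices to treat one such generator. Restricting $\alpha$ to the generic point of $C$, which lies in the smooth locus, produces by \cref{prop:UnramifiedRestriction} a class $\beta \in H^i_{nr}(k(\tilde{C})/K,\mu_m^{\otimes j})$ that is unramified over $K$ because $X$ is smooth and proper. Functoriality of the restriction under $\nu$ together with the projection formula (corestriction after restriction is multiplication by the residue degree) matches the degree factors appearing in the proper pushforward of the cycle $\div_{\tilde{C}}(f)$, and reduces the claim to the reciprocity statement on the smooth proper curve $\tilde{C}$:
\[\sum_{p \in \tilde{C}} \operatorname{ord}_p(f)\, (i_p)_*\!\p*{\restr{\beta}{p}} = 0 \in H^i(K,\mu_m^{\otimes j}),\]
the sum running over closed points and $(i_p)_*$ being the pushforward along $\kappa(p)/K$.

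To prove this reciprocity I would form the cup product $\gamma = \beta \cup (f) \in H^{i+1}(k(\tilde{C}),\mu_m^{\otimes j+1})$, where $(f) \in H^1(k(\tilde{C}),\mu_m)$ is the Kummer class of $f$. Since $\beta$ is unramified, the Leibniz rule for residues gives $\partial_p(\gamma) = \operatorname{ord}_p(f)\restr{\beta}{p}$ at every closed point $p$, so the displayed sum equals $\sum_p (i_p)_*\partial_p(\gamma)$. The main obstacle is exactly the vanishing of this last sum: it is the reciprocity law $\sum_{p}(i_p)_*\partial_p(\gamma)=0$ for $\gamma \in H^{i+1}(k(\tilde{C}),\mu_m^{\otimes j+1})$ on a complete curve, expressing that the residue maps assemble into a complex whose proper pushforward to $\Spec K$ vanishes. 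This is the content of Rost's theory of cycle modules (equivalently, Kato's reciprocity for the Gersten--Bloch--Ogus complex), and it is where the properness of $\tilde{C}$---hence of $X$---is essential; I would invoke it as the one nontrivial external input, following \cite{Mer08}. A minor point to handle along the way is that the field of constants of $\tilde{C}$ may be a finite extension $L/K$, in which case every $(i_p)_*$ factors through $H^i(L,\mu_m^{\otimes j})$ and the reciprocity is applied over $L$.
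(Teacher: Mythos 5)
The paper offers no proof of this proposition---it is quoted directly from \cite[Section 2.4]{Mer08}---so there is nothing in the text to compare against; your argument is a correct reconstruction of the standard construction given there. You correctly isolate the one genuinely nontrivial input (Kato/Rost reciprocity for the residue complex of a smooth proper curve over $K$), correctly use smoothness for the well-definedness of $\restr{\alpha}{z}$ and properness for the unramifiedness of the restriction to $\tilde{C}$, and handle the field-of-constants subtlety; this is essentially the same route as the cited source.
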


\subsection{Alterations in characteristic $p$}
The Merkurjev pairing works on smooth varieties, but since resolution of singularities is still unknown in positive characteristic we will need to use so-called alterations:
\begin{definition}
	Let $Y$ be a variety over an algebraically closed field $k$. An \emph{alteration} of $Y$ is a proper generically finite surjective morphism $Y' \to Y$, where $Y'$ is a non-singular variety over $k$.
\end{definition}
By de Jong \cite{deJ96}, alterations exist in any characteristic and by work of Gabber the degree of the alteration can be chosen to be coprime with any prime not dividing the characteristic of the field. In fact Temkin proves that one can choose the degree to be a power of the characteristic \cite[Theorem 1.2.5]{Tem17}(or degree 1 if $\chr(k)=0$).

\subsection{Specialization of varieties over $\Q$ or $\F_p(t)$}
Specialization of a decomposition of the diagonal can be used to find examples of varieties over $\Q$ that are not geometrically rational. The same argument will work over field of characteristic $p > 2$ with at least one transcendent element $t$ over $\F_p$. The following is the precise result we will use, where the proof is included to explain the technique. Both the statement and proof are adapted from \cite[Corollary 8.3]{SchSurvey}.

\begin{proposition}{{(cf. \cite[Corollary 8.3]{SchSurvey})}}
	\label{prop:ZSpecialization}
	Let $R = \Z_{(p)}$ or $R = \F_p[t]_{(t)}$, with field of fractions $K=\Q$ or $K=\F_p(t)$ respectively and residue field $\mathbb{F}_p$. Let $\mathscr{X} \to \Spec R$ be a scheme with generic fibre $X_{K}$ of dimension $n$ and geometric special fibre $Y_{\overline{\mathbb{F}_p}}$. Assume that $X_{\overline{K}}$ admits a decomposition of the diagonal \eg when $X_K$ is geometrically stably rational. Then the geometric special fibre $Y_{\overline{\mathbb{F}_p}}$ also admits a decomposition of the diagonal.
\end{proposition}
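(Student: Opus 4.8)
The plan is to reduce to a discrete valuation ring with residue field a finite extension of $\F_p$, build Fulton's specialization homomorphism on Chow groups, and apply it term by term to the decomposition of the diagonal; the only genuine subtlety is that the hypothesis lives over $\overline K$ whereas the conclusion must hold over $\overline{\F_p}$.

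First I would descend the decomposition to a finite extension. Writing $\overline K = \varinjlim_L L$ as the filtered colimit of its finite subextensions $L/K$ and using that Chow groups commute with such colimits of the base field, one has $CH_n(X_{\overline K} \times_{\overline K} X_{\overline K}) = \varinjlim_L CH_n(X_L \times_L X_L)$. A decomposition of the diagonal is a finite collection of cycles together with finitely many rational equivalences, so it is already defined over some finite $L/K$; that is, $X_L = X_K \times_K L$ admits a decomposition of the diagonal. I then pick a DVR $R' \supset R$ with fraction field $L$ by taking the integral closure of $R$ in $L$ (a Dedekind domain) and localizing at a maximal ideal lying over the maximal ideal of $R$. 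Its residue field is a finite extension $\F_q/\F_p$, and $\mathscr X' = \mathscr X \times_R R'$ has generic fibre $X_L$ and special fibre $Y_{\F_q} = Y \times_{\F_p} \F_q$.

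Next I would use the specialization homomorphism $\sigma \from CH_n(X_L \times_L X_L) \to CH_n(Y_{\F_q} \times_{\F_q} Y_{\F_q})$, defined on a cycle by taking its Zariski closure in $\mathscr X' \times_{R'} \mathscr X'$, which is automatically flat over the DVR $R'$, and intersecting with the special fibre cut out by a uniformizer. This construction is well-defined on rational equivalence and, crucially, requires no smoothness of $Y$ — essential here, since $Y$ is a degenerate fibre. Applying $\sigma$ to each summand of $\Delta_{X_L} = X_L \times z + Z$: the closure of $\Delta_{X_L}$ is the relative diagonal $\Delta_{\mathscr X'/R'}$, flat over $R'$ with special fibre $\Delta_{Y_{\F_q}}$, so $\sigma(\Delta_{X_L}) = \Delta_{Y_{\F_q}}$; the closure of $X_L \times z$ is $\mathscr X' \times_{R'} \overline z$, with special fibre $Y_{\F_q} \times z'$ for the $0$-cycle $z' = \sigma(z)$; and if $Z$ is supported on $D \times X_L$ for a divisor $D$, its closure is supported on $\overline D \times_{R'} \mathscr X'$, whose special fibre lies in $D' \times Y_{\F_q}$ for a divisor $D' \subset Y_{\F_q}$, because $\overline D$ is flat of relative dimension $n-1$. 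As $\sigma$ is a group homomorphism it preserves the equation, producing a decomposition of the diagonal on $Y_{\F_q}$.

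Finally I would base change from $\F_q$ to $\overline{\F_p}$: flat pullback to $CH_n(Y_{\overline{\F_p}} \times_{\overline{\F_p}} Y_{\overline{\F_p}})$ carries the diagonal to the diagonal, $Y_{\F_q} \times z'$ to $Y_{\overline{\F_p}} \times z'_{\overline{\F_p}}$, and the supported summand to a summand still supported on a divisor times $Y_{\overline{\F_p}}$, which gives the desired decomposition for $Y_{\overline{\F_p}}$. The technical heart — the step I expect to be the main obstacle — is verifying that $\sigma$ is well-defined on rational equivalence and respects all three pieces of the decomposition over the possibly singular special fibre; this is precisely where Fulton's closure-and-restrict specialization is needed, rather than the intersection-product ring homomorphism that is available only when $Y$ is smooth.
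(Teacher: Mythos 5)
Your proof is correct and follows essentially the same route as the paper: descend the decomposition of the diagonal from $\overline{K}$ to a finite extension $L/K$, extend $R$ to a discrete valuation ring with fraction field $L$ and finite residue field, apply Fulton's closure-and-restrict specialization homomorphism term by term, and base change to $\overline{\F_p}$. The only cosmetic difference is that the paper first passes to the completion of $R$ so that the integral closure in $L$ is automatically local, whereas you take the integral closure of $R$ itself and localize at a maximal ideal over $\midl_R$ --- both are standard and equivalent here.
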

\begin{proof}
  Let $\mathscr{X}' \to R'$ be the base change of $\mathscr{X}$ to the completion $R'$ of $R$, and let $K'$ be the field of fractions of $R'$. Since $X_{\overline{K}}$ admits a decomposition of the diagonal, so does $X_{\overline{K'}}$, the geometric generic fibre of $\mathscr{X'}$. We get a relation in $CH_n(X_{\overline{K'}} \otimes_{\overline{K'}} X_{\overline{K'}})$
  \[ [\Delta_{X_{\overline{K'}}}] = [X_{\overline{K'}} \times z] + [Z_{\overline{K'}}]\]
  where $z$ is a zero-cycle on $X_{\overline{K'}}$, and $Z_{\overline{K'}}$ is supported on $D \times X_{\overline{K'}}$ for $D$ a divisor in $X_{\overline{K'}}$. In fact, there is a finite extension $L/K'$ such that the above relation holds over $L$. Since $\Z_p$ is complete, the integral closure of $R'$ in $L$ is a DVR, which we will denote by $R''$ with residue field $k$. The map $\Spec R'' \to \Spec \Z_p$ is finite, so after a finite base change $\mathscr{X}'' \to \mathscr{X}'$, we may assume that we have the relation:
  \[ [\Delta_{X_L}] = [X_{L} \times z] + [Z_{L}] \]
  in $\CH_0(X_L \times_L X_L)$.
  Consider $\mathscr{X}'' \times_R'' \mathscr{X}'' \to R''$. Fulton \cite[Chapter 20.3]{Ful98} defines a specialization map $\sigma \from \CH_0(X_L) \to \CH_0(Y)$ which on cycles acts by taking the closure of the pushforward, then pulling back to the special fibre.

  Applying this map to the relation above gives
  \[ \sigma([\Delta_{X_L}]) = \sigma([X_{L} \times z]) + \sigma([Z_{L}])\]
  which is equal to the relation
  \[ [\Delta_{Y_{k}}] = [Y_{k} \times z] + [Z_{k} \times Y_{k}]\]
  where $z$ is a zero-cycle on $Y_k$ and $Z_{k}$ is supported on a divisor in $Y_k$.

So the special fibre $Y_k$ has a decomposition of the diagonal. After a base change we get that also $Y_{\overline{\F_p}}$ has a decomposition of the diagonal.
\end{proof}

\section{A particular (2,3)-complete intersection}
\label{sec:Example}
We will apply this specialization technique to find a quadric and a cubic fivefold, defined over $\Q$. Using the specialization used in \cite{NO20}, the intersection the two hypersurfaces specializes to a variety birational to the quadric constructed in \cite{HPT19}, which has a non-trivial unramified cohomology class. From this it will follow that the original complete intersection is stably irrational.
\subsection{Constructing an example}
Let $R = \Z$ or $R=\F_p[t]$, with field of fractions $K$. If $R=\Q$ we pick any two distinct primes $p,q \geq 5$ and set $u=p,v=q$, otherwise we set $u=t,v=(t-1)$. We will consider the complete intersection $\mathscr{X} \coloneqq \mathscr{Q} \cap \mathscr{C} \subset \P_{\Z}^6$, where $\mathscr{Q}$ and $\mathscr{C}$ are the following hypersurfaces:
\begin{equation}
  \label{eq:quadric}
   \mathscr{Q} =V\left(  u(\sum_{i=0}^6 x_i^2) + v(x_3x_6-x_4x_5) \right)  \subset \P_{R}^6
\end{equation}

\begin{equation}
  \label{eq:cubic}
  \begin{split}
  \mathscr{C} = &V\left( u(\sum_{i=0}^6 x_i^3) + v(x_0^2x_5 + x_1^2x_4 + x_2^2x_6 + x_3(x_5^2+x_4^2+x_3^2 -2x_3(x_6 + x_5 + x_4))) \right) \\ &\subset \P_{R}^6
  \end{split}
\end{equation}
\begin{proposition}
  Let $\mathscr{X}$ be as above and let $X$ be the generic fibre of $\mathscr{X} \to \Spec R$, then $X$ is a smooth complete intersection in $\P_{K}^6$.
\end{proposition}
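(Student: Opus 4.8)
The plan is to verify the two assertions---that $X$ is a complete intersection and that it is smooth---by the Jacobian criterion over the algebraic closure $\overline{K}$, exploiting that the whole family depends only on the single parameter $s = v/u$. Write $Q$ and $C$ for the two defining polynomials, so that $\mathscr{Q} = V(Q)$ and $\mathscr{C} = V(C)$.

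First I would dispose of the complete-intersection claim by showing $\mathscr{Q}$ is smooth. The quadratic form $Q$ splits as $u(x_0^2+x_1^2+x_2^2)$ on the first three variables plus $u(x_3^2+x_4^2+x_5^2+x_6^2)+v(x_3x_6-x_4x_5)$ on the last four; the Gram matrix of the second block is itself block-diagonal in the pairs $\{x_3,x_6\}$ and $\{x_4,x_5\}$, so (in characteristic $\neq 2$) the discriminant of $Q$ is, up to a unit, $u^3(4u^2-v^2)^2$. This is nonzero for the prescribed values: in the number field case $q\neq\pm 2p$ since $q$ is an odd prime and $2p$ is composite, and in the function field case $4t^2-(t-1)^2=3t^2+2t-1$ is a nonzero polynomial for $p\geq 5$. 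Hence $\mathscr{Q}$ is a smooth, and therefore irreducible, quadric fivefold. Since $\mathscr{C}$ does not contain $\mathscr{Q}$ (equivalently $Q\nmid C$, which is clear from the pure-power parts $u\sum x_i^2$ and $u\sum x_i^3$), the intersection $X=\mathscr{Q}\cap\mathscr{C}$ has dimension $4$, i.e.\ it is a complete intersection of codimension $2$.

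For smoothness I would argue that the singular members of the pencil are cut out by a single discriminant. Because $\nabla Q$ never vanishes on $\mathscr{Q}$, any singular point $P$ of $X$ must satisfy a Lagrange relation $\nabla C(P)=\mu\,\nabla Q(P)$ together with $Q(P)=C(P)=0$. Writing these equations out, the three coming from $x_0,x_1,x_2$ factor as $x_i\bigl(3ux_i+2vx_{\sigma(i)}-2\mu u\bigr)=0$ with $\sigma\colon 0\mapsto 5,\,1\mapsto 4,\,2\mapsto 6$, which lets me split into cases according to which of $x_0,x_1,x_2$ vanish. In each case I would eliminate $\mu$ and the remaining coordinates using the four relations attached to $x_3,\dots,x_6$ together with the two hypersurface equations; homogeneity forces the parameter $s=v/u$ to be a root of an explicit polynomial $\Delta(s)\in\Z[s]$. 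Conversely, $s=0$ gives the Fermat-type intersection $\sum x_i^2=\sum x_i^3=0$, which is smooth in characteristic $0$ (a singular point would force $k\lambda^2=0$ with $1\le k\le 7$ and $\lambda\neq 0$), so $\Delta\not\equiv 0$.

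The two cases of the proposition then follow from one arithmetic input: that the leading and constant coefficients of $\Delta$ are divisible only by $2$ and $3$. Granting this, in the function field case $s=(t-1)/t$ is transcendental over $\F_p$ while $\Delta\bmod p$ is a nonzero polynomial (its leading coefficient is a unit mod $p$ for $p\geq 5$), so $\Delta(s)\neq 0$; in the number field case $s=q/p$ is already in lowest terms with $p,q\geq 5$, so by the rational root theorem $q/p$ cannot be a zero of $\Delta$. Either way $X_{\overline{K}}$ has empty singular locus and is smooth. I expect the main obstacle to be precisely this elimination step: identifying $\Delta$ (or at least verifying that no prime $\geq 5$ divides its top and bottom coefficients), which is where the hypothesis $p,q\geq 5$ enters. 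In practice this is most safely discharged with a computer algebra system, by specializing $u,v$ and checking that the ideal generated by $Q$, $C$, and the $2\times 2$ minors of the Jacobian matrix is the unit ideal.
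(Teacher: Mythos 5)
Your outline is coherent in principle (use smoothness of the quadric to reduce the singular locus of $X$ to a Lagrange condition $\nabla C=\mu\nabla Q$, eliminate down to a discriminant $\Delta(s)$ in $s=v/u$, and rule out $s=q/p$ arithmetically), but as written the proof is incomplete at exactly its load-bearing step: the claim that the leading and constant coefficients of $\Delta$ have no prime factor $\geq 5$ is never verified, only ``granted'' and deferred to a computer algebra system. That claim is the entire content of the proposition for the specific $u,v$ at hand. Worse, there is concrete reason to doubt it: your own test fibre $s=0$, the Fermat $(2,3)$-intersection, is \emph{singular} in characteristics $5$ and $7$ --- the point $(1:1:1:1:1:0:0)$ over $\F_5$ (resp.\ $(1:\dots:1)$ over $\F_7$) lies on both Fermat hypersurfaces and the gradients $(2x_i)$ and $(3x_i^2)$ are proportional there. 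So the constant term $\Delta(0)$ of any honest discriminant is divisible by $5$ and by $7$, the rational-root test via $q\nmid c_0$ fails for $q\in\{5,7\}$, and you are left relying on the leading coefficient, which you also have not computed. The case analysis over which of $x_0,x_1,x_2$ vanish likewise risks branches where a naive resultant vanishes identically, so even the shape of $\Delta$ requires care.

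The paper's proof is far shorter and sidesteps every computation above: it looks at the integral model $\mathscr{X}\to\Spec R$ and notes that reducing modulo $(q)$ (resp.\ $(t-1)$) kills $v$, so that fibre is precisely the intersection of the Fermat quadric and Fermat cubic over $\F_q$ (resp.\ $\F_p$); smoothness of that one closed fibre propagates to the generic fibre because the set of points of $\Spec R$ with smooth fibre is open and nonempty. In other words, the specialization $v\mapsto 0$, which you invoke only to show $\Delta\not\equiv 0$ in characteristic zero, is in the paper the whole argument, with the arithmetic of $R$ replacing any discriminant computation. (The characteristic $5$ and $7$ subtlety noted above is also a wrinkle for the paper's appeal to smoothness of the Fermat fibre, but that is a separate issue from the gap in your proposal.)
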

\begin{proof}
  Consider the scheme $\mathscr{X} \to \Spec R$.  If $R=\Q$, the fibre over $(q)$ is the intersection of the Fermat quartic and the Fermat cubic in $\P_{\F_q }^6$, which is smooth. Thus $X$ is smooth by generic smoothness. If $R=\F_p[t]$ we look at the fibre over the ideal $(t-1)$ and apply the same argument.
\end{proof}

Let $\mathfrak{p}$ be the ideal $(p)$ or $(t)$ depending on if $R$ is $\Z$ or $\F_p[t]$ respectively. Let $\mathscr{X} \to \Spec R_{\mathfrak{p}}$ be defined by the two equations \eqref{eq:quadric} and \eqref{eq:cubic}. The fibre $X_p$ above the closed point $\Spec \F_p$ in the DVR $R_{\mathfrak{p}}$ is the complete intersection in $\P^6_{\F_p}$ of the two hypersurfaces:
\begin{equation}
  \label{eq:Qquadric}
  Q_p = V(x_3x_6-x_4x_5)
\end{equation}
\begin{equation}
  \label{eq:Qqubic}
  C_p = V \left(x_0^2x_5 + x_1^2x_4 + x_2^2x_6 + x_3(x_5^2+x_4^2+x_3^2 -2(x_3x_6 + x_3x_5 + x_3x_4)) \right)
\end{equation}
We will prove that $X_p$ does not have a decomposition of the diagonal. Then, from \cref{prop:ZSpecialization} it will follow that $X$ is not geometrically stably rational over $\Q$.

The hypersurface $Q_p$ is the cone over $\P_{\F_p}^1 \times \P_{\F_p}^1$ embedded in the $\P_{\F_p}^3 \subset \P_{\F_p}^6$ with coordinates $x_3,x_4,x_5,x_6$. It is singular along the plane $V(x_3,x_4,x_5,x_6)$, which is the vertex of the cone.

The complete intersection $X_p = Q_p \cap C_p$ is singular along the plane $V(x_3,x_4,x_5,x_6)$. Additionally, it is singular along four curves: The plane conics defined by
\begin{gather*}
  V(x_1,x_5,x_6,x_3-x_4,x_0^2 + x_2^2 + x_3^2) \\
  V(x_0,x_4,x_6,,x_3 - x_5,x_1^2 + x_2^2 + x_3^2)
\end{gather*}
and the plane cubics defined by:
\begin{gather*}
  V(x_1,x_2,x_3,x_5,x_4^3+x_0^2x_6)\\
  V(x_0,x_2,x_3,x_4,x_5^3+x_1^2x_6)
\end{gather*}

\subsection{Proving stable irrationality}
The special fibre $X_p$ is very singular, which makes it more difficult to prove stable irrationality. The first step in alleviating this is:

\begin{lemma}
	The map $\pi \from P \coloneqq \P_{\P_{\F_p}^1 \times \P_{\F_p}^1}(\sO^{\oplus 3} \oplus \sO(1,1)) \to Q_p \subset \P_{\F_p}^6$ defined by the base-point-free linear system $\abs{\sO_P(1)}$ is the blow-up of $Q_p$ in the vertex plane $V(x_3,x_4,x_5,x_6)$.
\end{lemma}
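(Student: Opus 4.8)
The plan is to identify the blow-up $\Bl{\Lambda} Q_p$ of $Q_p$ along the vertex plane $\Lambda \coloneqq V(x_3,x_4,x_5,x_6)$ with $P$ by realizing both as the closure of the graph of linear projection from $\Lambda$, so that $\pi$ becomes the blow-down. First I would set up the morphism. Writing $S \coloneqq \P^1 \times \P^1$ and $\sE \coloneqq \sO^{\oplus 3} \oplus \sO(1,1)$, Grothendieck's convention $P = \P_S(\sE) = \mathrm{Proj}\,\Sym\,\sE$ gives $\pi_* \sO_P(1) = \sE$, so $H^0(P,\sO_P(1)) = H^0(S,\sE) = H^0(S,\sO)^{\oplus 3} \oplus H^0(S,\sO(1,1))$ is $7$-dimensional. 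Because $\sO$ and $\sO(1,1)$ are globally generated on $S$, so is $\sE$, hence $\sO_P(1)$ is base-point-free and $\abs{\sO_P(1)}$ defines a morphism $\pi \from P \to \P^6$. I would take the three sections of the trivial summands as the coordinates $x_0,x_1,x_2$ and the four sections of $\sO(1,1)$ — the Segre sections of $S \incl \P^3$ — as $x_3,x_4,x_5,x_6$.

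Second, I would check that $\pi(P) \subseteq Q_p$. The four sections $x_3,x_4,x_5,x_6$ all factor through the single line-bundle summand $\sO(1,1) \incl \sE$, and as the Segre coordinates of $S \incl \P^3$ they satisfy $x_3 x_6 - x_4 x_5 = 0$ on $S$. Viewed as a section of $\sO_P(2)$, this quadratic expression therefore vanishes identically on $P$, so the image lands in $Q_p = V(x_3 x_6 - x_4 x_5)$.

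The crux is identifying $\pi$ with the blow-up. Away from the vertex, projection from $\Lambda$ is the morphism $Q_p \setminus \Lambda \to S$, $[x] \mapsto [x_3:x_4:x_5:x_6]$; since the ideal $\sI_\Lambda$ is generated on $Q_p$ by the linear forms $x_3,x_4,x_5,x_6$, the blow-up $\Bl{\Lambda} Q_p$ is the closure in $Q_p \times S$ of the graph of this projection, i.e.\ the incidence locus $\{([x],[y]) : x_i y_j = x_j y_i \text{ for } i,j \in \{3,4,5,6\}\}$. Over a point $[y] \in S$ the fibre of the second projection is $\{[x_0:x_1:x_2:\mu y_3:\mu y_4:\mu y_5:\mu y_6]\}$, a $\P^3$ with homogeneous coordinates $[x_0:x_1:x_2:\mu]$, so the incidence locus is a $\P^3$-bundle over $S$ and the first projection to $Q_p$ is the blow-down, which under this identification is $\pi$. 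Equivalently and more conceptually, $\pi^{-1}(\Lambda)$ is the Cartier divisor cut out by the vanishing of the $\sO(1,1)$-coordinate (the common zero locus of the base-point-free sections $x_3,\dots,x_6$), so $\pi^{-1}\sI_\Lambda \cdot \sO_P$ is invertible and $\pi$ factors through $\Bl{\Lambda} Q_p$ by its universal property; as both are projective and birational and $\pi$ restricts to an isomorphism over the smooth locus $Q_p \setminus \Lambda$, with $P$ smooth, the factorization is an isomorphism.

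The step I expect to be the main obstacle is pinning down the twist so that the fibre coordinate $\mu$ yields the summand $\sO(1,1)$ rather than its dual: rescaling the representative $(y_3,\dots,y_6)$ of a point of $S$ forces $\mu$ to transform inversely, so in the classical (sub-bundle) convention the $\P^3$-bundle is $\P_S(\sO^{\oplus 3} \oplus \sO(-1,-1))$, and only after dualizing into Grothendieck's convention does one obtain $\sE = \sO^{\oplus 3} \oplus \sO(1,1)$. Rather than patch local trivializations by hand, I would settle the convention once and for all via the identity $\pi_* \sO_P(1) = \sE$, which both fixes the correct bundle and certifies that the seven sections are the coordinates $x_0,\dots,x_6$.
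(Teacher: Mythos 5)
Your proposal is correct and follows essentially the same route as the paper: identify the seven sections of $\sO_P(1)$ (three from the trivial summands, four Segre sections from $\sO(1,1)$), observe the image satisfies $x_3x_6-x_4x_5=0$, and identify $P$ with the blow-up via the universal property, using that the preimage of the vertex plane is a divisor and $P$ is smooth. You simply supply more detail than the paper does on the final identification (the graph-closure description and the convention for $\P_S(\sE)$), where the paper's own proof is terse.
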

\begin{proof}
	Let $y_0,y_1,z_0,z_1$ be coordinates on $\P_{\F_p}^1 \times \P_{\F_p}^1$, and $U,V,W,T$ be coordinates in the fibres of $P$. A basis for $H^0(\sO_P(1))$ is then: $\set{U,V,W,y_0z_0T,y_0z_1T,y_1z_0T,y_1z_1T}$. The corresponding map $P \to \P_{\F_p}^6$ has image $Q_p$. The projective bundle $P$ is smooth and $\pi^{-1}(V(x_3,x_4,x_5,x_6))$ is the subbundle $\P_{\P_{\F_p}^1 \times \P_{\F_p}^1}(\sO^{\oplus 3})$, which is a divisor. Hence $P$ is the blow-up of the vertex plane.
\end{proof}

Let $F$ be the following polynomial in $\abs{\sO_P(2) \otimes p^*(\sO_{\P_{\F_p}^1 \times \P_{\F_p}^1}(1,1))}$:
\begin{equation}
\begin{aligned}
  \label{eq:X1}
  F(y_0,y_1,z_0,z_1,U,V,W,T)&=y_0z_1U^2 + y_1z_0 V^2 + y_1z_1W^2 \\
   &+ y_0z_0(y_1^2z_0^2 + y_0^2z_1^2 + y_0^2z_0^2-2(y_1z_1 + y_1z_0 + y_0z_1)T^2
\end{aligned}
\end{equation}
Then $\pi^{-1}(X_p)$ is defined by $TF(y_0,y_1,z_0,z_1,U,V,W,T)=0$ which we recognize as having two components. We will denote the components by $X_1$ and $X_2$. Let $X_1$ be defined by $F(y_0,y_1,z_0,z_1,U,V,W,T)=0$, $X_2$ be defined by $T=0$, and $Z$ be the intersection of the two components.

The component $X_2$ is a projective bundle over $\P_{\F_p}^1 \times \P_{\F_p}^1$, and is therefore a smooth rational variety. Precisely, $X_2 \simeq \P_{\P_{\F_p}^1 \times \P_{\F_p}^1}(\sO^{\oplus 3})$.

The intersection $Z$ of the two components, defined by 
\[T=F(y_0,y_1,z_0,z_1,U,V,W,T)=0,\]
 is a conic bundle over $\P_{\F_p}^1 \times \P_{\F_p}^1$. The bundle is defined by the equation:
\begin{equation}
	\label{eq:Z1}
	y_0x_1U^2 + x_0y_1 V^2 + x_1y_1W^2=0
\end{equation}
in the projective bundle $X_2$.
$Z$ is rational since it is birational to a hypersurface in affine space $\A^5$ with coordinates $x_1,y_1,U,V,W$ by setting $y_0=x_0=1$. Since the equation is linear in $x_1$ (and $y_1$), the variety is rational.

The final component to study is the component $X_1$ defined by 
\[F(y_0,y_1,z_0,z_1,U,V,W,T)=0.\]
%
%
The variety $X_1$ is birational to the Hassett-Pirutka-Tschinkel quartic. We know that for the Hasset-Pirutka-Tschinkel quadric, the following holds: (c.f. \cite[Proposition 10]{HPT19}).
\begin{proposition}
	\label{prop:NontrivialClassHPT}
	Let $k=\C$ (\cite{HPT19}) or let $k$ be an algebraically closed field of characteristic different from 2 (\cite{SchSurvey}), let $\P_k^2 \times \P_k^3$ have coordinates $x,y,z$ and $s,t,u,v$ respectively. Let $K = k(x,y) = k(\P_k^2)$, and $Y \to \P_k^2$ be the following quadric surface:
	Then the hypersurface defined by:
	\[yzs^2 + xzt^2 + xyu^2 + F(x,y,z)v^2\]
	where
	\[F(x,y,z)=x^2+y^2+z^2-2(xy+xz+yz)\]
	is a quadric bundle over $\P_k^2$ with a non-trivial unramified cohomology class \[0 \neq \alpha \in H_{nr}^2(k(\P_k^2)/k,\mu_2^{\otimes 2}).\]
\end{proposition}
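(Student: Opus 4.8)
The plan is to write the class $\alpha$ explicitly as a degree-two symbol pulled back from the base, and then to check unramifiedness and non-triviality separately. Set $K = k(\P_k^2) = k(x/z, y/z)$ and let $q = \langle yz,\, xz,\, xy,\, F\rangle$ be the rank-four quadratic form over $K$ cutting out the generic fibre of the bundle, so that $k(Y) = K(q)$ is the function field of the projective quadric defined by $q$. As $k$ is algebraically closed of characteristic $\neq 2$ we identify $\mu_2^{\otimes j} \cong \Z/2$, so $H^1(K,\mu_2) = K^*/(K^*)^2$ and any $f,g \in K^*$ give a symbol $(f,g) \in H^2(K,\mu_2^{\otimes 2})$. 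I would take
\[\alpha = (x/z,\ y/z) \in H^2(K,\mu_2^{\otimes 2}),\]
pulled back along $K \hookrightarrow k(Y)$. Here $Y$ denotes the total space of the quadric bundle, and the assertion to prove is that $0 \neq \alpha \in H_{nr}^2(k(Y)/k, \mu_2^{\otimes 2})$.

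For unramifiedness, by the purity part of \cref{thm:EtaleLocGlob} together with the restriction formalism of \cref{prop:UnramifiedRestriction} it suffices, on a smooth projective model of $Y$, to verify $\partial_E \alpha = 0$ for every prime divisor $E$. The tame-symbol formula shows that $\beta := (x/z, y/z)$ ramifies on $\P_k^2$ only along the three coordinate lines, with for instance $\partial_{\{x=0\}}\beta = \overline{y/z}$. I would then separate the divisors $E$ of $Y$ into those dominating $\P_k^2$ — for which the valuation is trivial on $K$, so the pulled-back class $\alpha$ has no residue — and those lying over a curve $C \subset \P_k^2$, whose residue is, up to the ramification index, the pullback of $\partial_C\beta$. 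If $C$ is not a coordinate line (in particular if $C = \{F=0\}$) then $\partial_C\beta = 0$ already, and we are done. The heart of the matter is a divisor $E$ over a coordinate line, say $\{x=0\}$: this is where the special shape of $F$ is used, through the identity $F(0,y,z) = (y-z)^2$. Indeed the fibre over $\{x=0\}$ degenerates to $yzs^2 + (y-z)^2v^2 = 0$, a quadric whose components are defined only after adjoining $\sqrt{-yz}$; hence $\kappa(E) \supseteq k(\{x=0\})(\sqrt{-yz})$. Over that field $-yz$ is a square, so (using that $-1$ is a square in $k$) $y/z = yz/z^2$ is a square as well, and the residue $\partial_{\{x=0\}}\beta = \overline{y/z}$ dies upon restriction to $\kappa(E)$; thus $\partial_E\alpha = 0$. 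The analogous computations with $F(x,0,z) = (x-z)^2$ and $F(x,y,0) = (x-y)^2$ dispose of the other two lines, and the finitely many divisors contracted to points (over the pairwise intersections of the coordinate lines and over their meetings with $\{F=0\}$) would be handled by hand.

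For non-triviality I would appeal to the theory of quadratic forms over a function field of a quadric. First $\beta \neq 0$ in $H^2(K,\mu_2^{\otimes 2})$, since its residue $\overline{y/z}$ along $\{x=0\}$ is a non-square. The only way $\beta$ could become trivial in $k(Y) = K(q)$ is if it lies in the kernel of $H^2(K) \to H^2(K(q))$; by the standard description of this kernel (Arason), a nonzero symbol can die there only when the anisotropic form $q$ is similar to the $2$-fold Pfister form $\langle\langle x/z,\, y/z\rangle\rangle = \langle 1,\, -x/z,\, -y/z,\, xy/z^2\rangle$. But this Pfister form has trivial discriminant, whereas the discriminant of $q$ is $d(q) = yz\cdot xz \cdot xy \cdot F \equiv F$ modulo squares, and $F$ is not a square in $K$ because $\{F=0\}$ is a reduced irreducible conic. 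Since the discriminant is a similarity invariant of even-rank forms, $q$ is not similar to $\langle\langle x/z, y/z\rangle\rangle$, so $\beta$ survives and $\alpha \neq 0$. This argument presupposes that $q$ is \emph{anisotropic} over $K$, which I would confirm by a direct specialization-and-residue computation on its coefficients.

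The main obstacle is precisely the unramifiedness check along the three coordinate lines: it works only because $F$ has been engineered so that each restriction $F(0,y,z)$, $F(x,0,z)$, $F(x,y,0)$ is a perfect square, which is exactly what forces $\sqrt{-yz}$ (respectively $\sqrt{-xz}$, $\sqrt{-xy}$) into the relevant residue field and annihilates the otherwise non-vanishing residue of $\beta$. The remaining friction — choosing a smooth projective model and bookkeeping the contracted divisors and special points, together with the verification that $q$ is anisotropic — is routine by comparison but still needs to be carried out carefully.
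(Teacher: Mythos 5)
The paper does not actually prove this proposition: it is imported verbatim from \cite[Proposition 10]{HPT19} and \cite[Proposition 9.6]{SchSurvey}, so there is no in-paper proof to compare against. Your reconstruction does follow the same route as the cited sources: the class is the symbol $(x/z,y/z)$ pulled back from the base, non-triviality is a discriminant obstruction, and unramifiedness hinges on $F$ restricting to a perfect square on each coordinate line. Your non-triviality argument is essentially complete and correct: if $q$ is isotropic then $K(q)/K$ is purely transcendental and nothing dies, and if $q$ is anisotropic then a nonzero symbol dies in $K(q)$ only if $q$ is similar to the corresponding $2$-fold Pfister form (Merkurjev plus Arason--Pfister plus the subform theorem), which is excluded because $\operatorname{disc}(q)\equiv F$ is a non-square while Pfister forms have trivial discriminant. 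So the anisotropy of $q$ is not even needed as a separate input.

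The gap is in the unramifiedness check at divisors $E$ lying over a coordinate line. From ``the fibre over $\{x=0\}$ is a pair of planes conjugate over $\sqrt{-yz}$'' you infer ``hence $\kappa(E)\supseteq k(\{x=0\})(\sqrt{-yz})$,'' but this inference is not valid: a prime divisor of an arbitrary smooth projective model lying over $\{x=0\}$ need not dominate a component of any particular fibre, and its residue field need not contain that quadratic extension for such geometric reasons. The correct argument (Colliot-Th\'el\`ene--Ojanguren, Pirutka, Schreieder) passes to the completion of $k(Y)$ at the valuation $\nu_E$: over $\hat{K}_{v_x}$ one has $q\simeq\langle yz,1\rangle\perp x\langle z,y\rangle$ up to a square unit (Hensel applied to $F/(y-z)^2$), and since $q$ is isotropic over its own function field, Springer's theorem forces one of the two residue forms to be isotropic over $\kappa(E)$ whenever the ramification index is odd; either case makes $\overline{y/z}$ a square in $\kappa(E)$ and kills the residue. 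Moreover, the divisors contracted to points of $\mathbb{P}^2$ are not ``handled by hand'' in the literature: they require either Pirutka's general formula for $H^2_{nr}$ of quadric surface bundles or Schreieder's vanishing theorem (the companion of \cref{thm:QuadricUnramified}), and this is the genuinely hard part of the proof. So your outline identifies the right mechanism but the two steps where all the work lives are asserted rather than proved.
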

In \cite{HPT19}, the authors work over $\C$, but in \cite[Proposition 9.6]{SchSurvey} it is observed that the same proof works as long as $k$ is an algebraically closed field of characteristic different from 2. An immediate consequence is:
\begin{corollary}
	\label{cor:AlphaNonTriv}
	Let $k=\overline{\F_p}$ and $X_1$ be as above. Then there is a non-trivial class $0 \neq \alpha \in H_{nr}^2(k(X_1)/k,\mu_2^{\otimes 2})$
\end{corollary}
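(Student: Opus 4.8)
The plan is to identify $X_1$ birationally over $k=\overline{\F_p}$ with the Hassett--Pirutka--Tschinkel quadric bundle $Y$ of \cref{prop:NontrivialClassHPT}, and then invoke the birational invariance of unramified cohomology. Both $X_1 \to \P_{\F_p}^1 \times \P_{\F_p}^1$ and $Y \to \P_k^2$ are quadric surface bundles, so once the function fields are matched the nontrivial class transfers for free; no further geometry is required.

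First I would dehomogenise. On the affine chart $y_0 = z_0 = 1$ of the base, with coordinates $y = y_1$ and $z = z_1$, the defining polynomial $F$ of \eqref{eq:X1} becomes
\[
zU^2 + yV^2 + yzW^2 + \bigl(1 + y^2 + z^2 - 2(y + z + yz)\bigr)T^2.
\]
Dehomogenising $Y$ at $x = 1$, so that $\P_k^2$ has affine coordinates $y,z$, its equation reads
\[
yzs^2 + zt^2 + yu^2 + \bigl(1 + y^2 + z^2 - 2(y + z + yz)\bigr)v^2,
\]
since $F(1,y,z) = 1 + y^2 + z^2 - 2(y + z + yz)$. The two expressions coincide under the relabelling $s = W$, $t = U$, $u = V$, $v = T$ of the fibre coordinates, because the coefficients $yz$, $z$, $y$ and $F(1,y,z)$ match term by term. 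The base $\P_{\F_p}^1 \times \P_{\F_p}^1$ is birational to $\P_k^2$ via the common dense open $\A_k^2$ with coordinates $y,z$, and this identification is compatible with the matching of fibre coordinates just described. Hence $k(X_1) \simeq k(Y)$ as extensions of $k$.

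Finally, since $H_{nr}^2(-/k, \mu_2^{\otimes 2})$ is defined intrinsically in terms of the field extension over $k$, the isomorphism $k(X_1) \simeq k(Y)$ over $k$ yields an isomorphism
\[
H_{nr}^2(k(X_1)/k, \mu_2^{\otimes 2}) \simeq H_{nr}^2(k(Y)/k, \mu_2^{\otimes 2}).
\]
As $p \geq 5$ (resp.\ $p > 2$), the field $k = \overline{\F_p}$ is algebraically closed of characteristic different from $2$, so \cref{prop:NontrivialClassHPT} applies and the right-hand group contains a nontrivial class; therefore so does the left-hand group.

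The main obstacle is purely bookkeeping: one must make the birational identification of the bases explicit and track which fibre coordinate of $X_1$ corresponds to which coordinate of $Y$, so that the extra square term $W^2$ is paired with the coefficient $yz$ rather than with $z$ or $y$. Once this dictionary is fixed the verification is a direct comparison of coefficients, and the conclusion is then immediate from the birational invariance of $H_{nr}$.
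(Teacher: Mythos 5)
Your proof is correct and follows essentially the same route as the paper: dehomogenise both equations, match coefficients to see that $X_1$ and the Hassett--Pirutka--Tschinkel bundle $Y$ are birational, and conclude via the birational invariance of unramified cohomology. The only (immaterial) difference is that you dehomogenise $Y$ on the chart $x=1$ while the paper uses $z=1$; your coordinate dictionary checks out either way.
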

\begin{proof}
	If $Y$ is the quadric bundle defined in \cref{prop:NontrivialClassHPT}, then it is birational to $X_1$. To see this, note that after seting $z=1$ in the defining equation of $Y$, and $y_0=z_0=1$ in the defining equation for $X_1$ the equations are equal, so the varieties are birational. Therefore, $k(Y) \simeq k(X_1)$, so the corresponding unramified cohomology groups are also isomorphic.
\end{proof}

Similar to the Hassett-Pirutka-Tschinkel quartic, $X_1$ is also singular along four curves, two ``vertical'' curves (curves projecting to a point in $\P_{\F_p}^1 \times \P_{\F_p}^1$) defined by
\begin{gather*}
	y_1=z_1-y_0=U=V^2 + W^2 + T^2=0\\
	z_1=y_1-z_0=V=U^2 + W^2 + T^2=0
\end{gather*}
and two ``horizontal curves'', projecting to coordinate axes in $\P_{\F_p}^1 \times \P_{\F_p}^1$, defined by
\begin{gather*}
	z_1=V=T=y_1W^2+y_0U^2=0\\
	y_1=U=T=z_1W^2+y_0V^2=0
\end{gather*}
Importantly, we see that $Z$ meets the smooth locus of $X_1$. One can also compute that the rational variety $Z$ is singular along the same ``horizontal curves'' as $X_1$.

The following result by Schreieder will ensure that the singularities of $X_1$ don't interfere with the Merkurjev pairing.
\begin{theorem}{{\cite[Theorem 10.1]{SchSurvey}}}
  \label{thm:QuadricUnramified}
  Let $f \from Y \to S$ be a surjective morphism of proper varieties over an algebraically closed field $k$ with $char(k) \neq 2$ whose generic fibre is birational to a smooth quadric over $k(S)$. Let $n = \dim(S)$ and assume that there is a class $\alpha \in H^n(k(S),\mu_2^{\otimes n})$ with $f^* \alpha \in H_{nr}^n(k(Y)/k,\mu_2^{\otimes n})$.
  Then for any dominant generically finite morphism $\tau \from Y' \to Y$ of varieties and for any subvariety $E \subset Y'$ that meets the smooth locus of $Y'$ and which does not dominant $S$ via $f \circ \tau$, we have $\restr{(\tau^* f^* \alpha)}{E} = 0 \in H^n(k(E),\mu_2^{\otimes n})$.
\end{theorem}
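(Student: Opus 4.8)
The plan is to reduce the desired vanishing to a statement about the function field of the image of $E$ in $S$, which has too small a transcendence degree to support a nonzero class in degree $n$. Write $g \from Y' \to S$ for the composite $f \circ \tau$. Since $\tau$ is dominant, pullback along it preserves unramified classes, so $\tau^* f^* \alpha = g^* \alpha$ lies in $H_{nr}^n(k(Y')/k, \mu_2^{\otimes n})$. Because $E$ meets the smooth locus of $Y'$, its generic point $\eta_E$ lies in the smooth locus, and \cref{prop:UnramifiedRestriction} gives a well-defined restriction $\restr{(g^* \alpha)}{E} \in H^n(k(E), \mu_2^{\otimes n})$. Let $T \coloneqq \overline{g(E)} \subseteq S$ with generic point $\eta_T = g(\eta_E)$; the hypothesis that $E$ does not dominate $S$ says precisely that $T \neq S$, so $\dim T \leq n-1$.

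The endgame I aim for is to show that $\restr{(g^*\alpha)}{E}$ lies in the image of the pullback $H^n(k(T), \mu_2^{\otimes n}) \to H^n(k(E), \mu_2^{\otimes n})$ induced by the dominant map $E \to T$. Granting this, the theorem follows for dimension reasons: $k(T)$ is the function field of a variety of dimension $\leq n-1$ over the algebraically closed field $k$, so its mod-$2$ cohomological dimension is at most $n-1$, whence $H^n(k(T), \mu_2^{\otimes n}) = 0$ and the restriction vanishes. Thus the whole content of the theorem is the factorization of $\restr{(g^*\alpha)}{E}$ through the base $T$.

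To establish the factorization I would work locally at $\eta_E$. Set $A = \sO_{Y', \eta_E}$, a regular local ring (as $\eta_E$ is a smooth point) with residue field $k(E)$; by \cref{thm:EtaleLocGlob} the unramified class $g^*\alpha$ extends uniquely to $H^n(A, \mu_2^{\otimes n})$, and $\restr{(g^*\alpha)}{E}$ is its image in $H^n(k(E), \mu_2^{\otimes n})$. The map $g$ induces a local homomorphism $\sO_{S, \eta_T} \to A$, and were $\alpha$ itself unramified at $\eta_T$ the factorization would be immediate: $\alpha$ would restrict to a class in $H^n(k(T), \mu_2^{\otimes n})$ and functoriality of the restriction through these local rings would identify $\restr{(g^*\alpha)}{E}$ with its pullback to $k(E)$. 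The difficulty is exactly that $\alpha$ need not be unramified on $S$: only its pullback to the quadric bundle is unramified, and $\alpha$ may have nonzero residues along the divisors of $S$ over which the quadric degenerates.

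This is where the hypothesis that the generic fibre of $f$ is a smooth quadric is essential, and it is the main obstacle. The fibre of $f$ over $\eta_T$ is a quadric $Q$ over $k(T)$, and $\tau$ sends $\eta_E$ into $Q$. Following the residue analysis for quadric bundles in \cite{SchSurvey}, the condition that $g^*\alpha$ is unramified forces the residues of $\alpha$ along the divisors of $S$ through $\eta_T$ to be of the special shape coming from the symbols that cut out the quadric; passing through the fibre $Q$ over $\eta_T$ kills these residues, so that $\restr{(g^*\alpha)}{E}$ ends up depending on $\alpha$ only through a class defined over $k(T)$. Making this precise — tracking $\alpha$ and its residues through the quadric fibre $Q$ over the lower-dimensional image $T$ and verifying that the outcome factors through $H^n(k(T), \mu_2^{\otimes n})$ — is the technical heart of the argument; once it is in place, the cohomological-dimension bound of the second paragraph completes the proof.
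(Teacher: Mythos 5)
This statement is imported in the paper as a black-box citation of \cite[Theorem 10.1]{SchSurvey}; the paper gives no proof of its own, so your attempt can only be judged on its own merits. Your setup is sound and matches the strategy of Schreieder's actual argument: pass to the generic point of $E$ (legitimate since $E$ meets the smooth locus, via \cref{prop:UnramifiedRestriction}), observe that $T=\overline{g(E)}$ has dimension at most $n-1$, and finish with the bound on cohomological dimension of $k(T)$ over the algebraically closed field $k$, which indeed gives $H^n(k(T),\mu_2^{\otimes n})=0$.

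The gap is that your central reduction is vacuous and the real content is never supplied. You reduce the theorem to the claim that $\restr{(g^*\alpha)}{E}$ lies in the image of $H^n(k(T),\mu_2^{\otimes n})\to H^n(k(E),\mu_2^{\otimes n})$; but since you have already established that the source group is zero, ``lies in the image'' is literally the same statement as ``equals zero,'' so nothing has been reduced. The entire force of the theorem --- why the hypothesis that $f^*\alpha$ is unramified on a \emph{quadric} bundle kills the restriction at points of $S$ of positive codimension, even though $\alpha$ itself may be ramified on $S$ --- is exactly the step you defer to ``the residue analysis for quadric bundles in \cite{SchSurvey},'' i.e.\ to the very source the theorem is quoted from. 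You correctly diagnose where the difficulty sits, but a proof would have to carry out that analysis: handle image points $s\in S$ of arbitrary codimension (not only divisorial ones, which is where a naive residue computation lives), and invoke the quadric-theoretic input (the constraint that unramifiedness of $f^*\alpha$ places on the residues of $\alpha$ via the function field of the quadric over the residue field, together with facts such as Springer's theorem on odd-degree extensions). As written, the proposal is an accurate description of the shape of the proof rather than a proof.
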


We are now ready to prove that the special fibre does not have a decomposition of the diagonal. The proof is similar to the one found in \cite[Proposition 6.1]{SchTorsion}.

\begin{lemma}
 \label{lem:NoDoD}
	Let $X_p$ be the complete intersection $Q_p \cap C_p \subset \P_{\F_p}^6$ from \eqref{eq:Qquadric} and \eqref{eq:Qqubic}. Then $X_p$ does not admit a decomposition of the diagonal.
\end{lemma}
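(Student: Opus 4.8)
The plan is to argue by contradiction, detecting the obstruction on the quadric bundle component $X_1$ (which is birational to $X_p$) and using \cref{thm:QuadricUnramified} to compensate for the fact that $X_1$ and $X_p$ are singular, so that the Merkurjev pairing of \cref{prop:MerkurjevPairing} is not directly available.

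First I would reduce the statement to $X_1$. The component $X_1 = V(F)$ of $\pi^{-1}(X_p)$ is the strict transform of $X_p$ under the blow-up $\pi \from P \to Q_p$ (the part of $\pi^{-1}(X_p)$ complementary to the exceptional component $X_2 = V(T)$), so the induced morphism $X_1 \to X_p$ is proper and birational and $k(X_1) \simeq k(X_p)$. Using the description $\CH_0(X_p \times k(X_p)) \simeq \varinjlim_{\emptyset \neq U \subset X_p} \CH_d(U \times X_p)$ with $d = \dim X_p$, a decomposition of the diagonal of $X_p$ amounts to a relation $[\delta_{X_p}] = [z \times k(X_p)]$ for a zero-cycle $z$ over $\overline{\F_p}$; restricting to the dense open on which $\pi$ is an isomorphism transports this to a relation $[\delta_{X_1}] = [z' \times F]$ in $\CH_0(X_1 \times F)$, where $F \coloneqq k(X_1)$ and $z'$ is again an $\overline{\F_p}$-cycle. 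It therefore suffices to rule out such a relation on $X_1$.

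Next I would pass to a smooth model. By de Jong, Gabber and Temkin there is an alteration $\tau \from X' \to X_1$ with $X'$ smooth and projective over $\overline{\F_p}$ and $\deg \tau = e$ prime to $2$ (possible because $\chr \overline{\F_p} = p \geq 5 \neq 2$). Let $\beta \in H^2(k(S),\mu_2^{\otimes 2})$ be the Hassett-Pirutka-Tschinkel class of \cref{prop:NontrivialClassHPT} on $S \coloneqq \P_{\F_p}^1 \times \P_{\F_p}^1$, let $\alpha \coloneqq f^*\beta$ be the nontrivial class of \cref{cor:AlphaNonTriv} for the quadric bundle $f \from X_1 \to S$, and set $\alpha' \coloneqq \tau^*\alpha \in H_{nr}^2(k(X')/\overline{\F_p},\mu_2^{\otimes 2})$. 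The projection formula $\tau_*\tau^* = e \cdot \id$ with $e$ odd shows $\alpha' \neq 0$, and $\alpha'$ is unramified by functoriality of unramified cohomology for the smooth proper $X'$. Now I would pull $[\delta_{X_1}] = [z' \times F]$ back through $\tau$ in the first factor only: this replaces the diagonal by the graph $[\Gamma_\tau]$, which carries no off-diagonal components, and restricting to the generic point of $X'$ gives, over $F' \coloneqq k(X')$, a relation $[\tau(\delta_{X'})] = [z' \times F']$ in $\CH_0(X_1 \times F')$. Computing the first side on $X'$ through the canonical lift $\delta_{X'}$ yields $\restr{\alpha'}{\delta_{X'}} = \alpha' \neq 0$ by \cref{prop:UnramifiedRestriction}; the cycle $z' \times F'$ is supported on zero-dimensional subvarieties, which meet the smooth locus of $X'$ and do not dominate $S$ via $f \circ \tau$, so \cref{thm:QuadricUnramified} forces $\alpha'$ to restrict to $0$ along each of them. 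Comparing the two sides gives $\alpha' = 0$, a contradiction; hence $X_p$ has no decomposition of the diagonal, and this is exactly the input that \cref{prop:ZSpecialization} needs for \cref{thm:intro}.

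The main obstacle is making the final comparison rigorous. Since $X_1$ is singular the pairing of \cref{prop:MerkurjevPairing} cannot be used on it, so the equality of the two contributions must be read off on the smooth $X'$ through $\tau$, and one must control the additional ``conjugate'' zero-cycles that a generically finite map introduces under rational equivalence — it is for this reason that I pull back only in one factor, keeping the graph $\Gamma_\tau$ clean, and that the alteration degree is taken odd so that nothing is lost for $\mu_2$-coefficients. \Cref{thm:QuadricUnramified} is precisely the tool built for this: it supplies the vanishing of $\alpha'$ on every subvariety meeting the smooth locus of the chosen model and not dominating $S$, uniformly over all alterations, which is what makes the lifted pairing well defined modulo rational equivalence. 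On a smooth alteration the smooth-locus hypothesis is automatic (this is the abstract counterpart of the explicit observation that $Z$ meets the smooth locus of $X_1$), so the substantive point is the non-domination of $S$ by the constant cycle; I expect the bookkeeping of the conjugate terms, modelled on \cite[Proposition 6.1]{SchTorsion}, to be the delicate part rather than any single geometric input.
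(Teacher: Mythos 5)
Your overall strategy matches the paper's (transport the decomposition to the strict transform $X_1$, pass to an odd-degree alteration, and pair the pulled-back Hassett--Pirutka--Tschinkel class against both sides, using \cref{thm:QuadricUnramified} to deal with the singular locus), but there is a genuine gap at the very first reduction. When you move the relation $[\delta_{X_p}]=[z\times k(X_p)]$ from $X_p$ to $X_1$, you can only restrict it to the open set over which $\pi$ is an isomorphism; extending back to all of $X_1$ via the localization exact sequence necessarily introduces a correction term $z'\times k(X_1)$ with $z'$ a zero-cycle supported on $Z=X_1\cap X_2$, with coefficients over $k(X_1)$ rather than over $\overline{\F_p}$. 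Your write-up asserts that one obtains $[\delta_{X_1}]=[z'\times F]$ with $z'$ a constant $\overline{\F_p}$-cycle, i.e.\ it silently drops this term. The omission is not cosmetic: $Z$ is a conic bundle over $S=\P^1\times\P^1$, so it \emph{dominates} $S$, and \cref{thm:QuadricUnramified} therefore does not apply to it --- none of the tools you invoke can kill this contribution. The paper disposes of it by a separate argument: $Z$ meets the smooth locus of $X_1$, so $\restr{\alpha}{Z}$ is defined and unramified; $Z$ is rational, so its unramified $H^2$ vanishes; and the odd degree of the alteration together with $2\alpha=0$ then forces $\restr{\tau^*\alpha}{Z'}=0$. (This, not the smoothness of the alteration, is the role of the observation that $Z$ meets the smooth locus of $X_1$, which you cite for a different purpose.)

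A second, smaller soft spot: after base-changing to $F'=k(X')$ your relation still lives in $\CH_0$ of the singular variety $X_{1,F'}$, where the pairing of \cref{prop:MerkurjevPairing} is not available; pulling back ``in the first factor only'' does not produce a relation on $X'_{F'}$. Lifting the rational equivalence through the generically finite $\tau$ is exactly where the additional term $z''$ supported on $\tau^{-1}(X_1^{\mathrm{sing}})$ appears in the paper, and it is only for that term (whose support does not dominate $S$) that \cref{thm:QuadricUnramified} is the right tool. The argument can be completed, but only after reinstating both correction terms and treating the $Z$-supported one by the rationality argument above.
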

\begin{proof}
	First note that if a $X_p$ admits a decomposition of the diagonal, so does the base change of $X_p$ to $\overline{\F_p}$, the algebraic closure of $\F_p$. So in the remainder of the proof we will work over $k = \overline{\F_p}$. Let 
	\[\delta_{X_p} = z \times k({X_p}) \in CH_0({X_p} \times k({X_p}))\]
	be a decomposition of the diagonal of $X_p$ where $z$ is a zero-cycle on $X_p$.
	If we continue to let $X_1$ be the variety defined by the vanishing of \eqref{eq:X1} the map $X_1$ to $X_p$ is generically injective, so we can pull this relation back to $X_1$ and get the following equality:
	\begin{equation}
		\label{eq:DoDY}
		\delta_{X_1} = z_{X_1} \times k(X_1) + z' \times k(X_1) \in CH_0(X_1 \times k(X_1))
	\end{equation}
	for some $z'$ supported on $Z = X_1 \cap X_2$.
	
	Let $\tau \from X_1' \to X_1$ be an alteration of odd degree. Pulling back the equality \eqref{eq:DoDY} to $X_1'$ we get the equality:
	\begin{equation}
          \label{eq:DoDY'}
	\tau^*\delta_{X_1} = \tau^*z_{X_1} \times k(X_1) + \tau^*z' \times k(X_1) + z'' \times k(X_1) \in CH_0(X_1 \times k(X_1))
	\end{equation}
where now $z''$ is a zero-cycle supported on $\tau^{-1} X_1^{sing}$.

We now wish to compute the pairing of $\tau^* \alpha$ with both sides of \eqref{eq:DoDY'}, where $\alpha$ is the non-trivial class from \cref{cor:AlphaNonTriv}.
Computing the pairing of $\tau^*\alpha$ with the left hand side can be done as follows:
	\[\langle \tau^*\delta_{X_1}, \tau^* \alpha \rangle = \langle \tau_*\tau^*\delta_{X_1}, \alpha \rangle = (\deg \tau) \langle \delta_{X_1}, \alpha \rangle = (\deg \tau) \alpha \neq 0 \]

      On the other hand, the pairing of $\tau^* \alpha$ with each term of the right hand side of \eqref{eq:DoDY'} is zero. We show this term by term. Firstly, since we are working over the algebraically closed field $\overline{\F_p}$, $\tau^* \alpha$ vanishes when restricted to closed points. Hence $\langle \tau^*z \times k(X_1), \tau^*\alpha \rangle = 0$. For the second term, note that since $\tau^* z'$ is supported on $Z'$, it suffices to prove that the restriction of $\tau^* \alpha$ to $Z'$ is zero. Now, observe that even though $X_1$ is not smooth, the restriction $\restr{\alpha}{Z}$ is still defined since $Z$ meets the smooth locus of $X_1$. To compute the restriction of $\tau^* \alpha$ to $Z'$ we therefore consider the diagram:
\begin{equation*}
	\begin{tikzcd}
		Z' \arrow["\tau_Z"]{d} \arrow["i'"]{r} & X_1' \arrow["\tau"]{d} \\
		Z \arrow["i"]{r} & X_1
	\end{tikzcd}
\end{equation*}
from which we see that $\restr{\tau^*(\alpha)}{Z'} = \tau_Z^*(\restr{\alpha}{Z})$, and this is an unramified class, since $X_1'$ is smooth. The pushforward $(\tau_Z)_* \from H^2(k(Z),\mu_2) \to H^2(k(Z),\mu_2)$ takes unramified classes to unramified classes, so $(\deg \tau) (\restr{\alpha}{Z})$ is unramified, and therefore zero, since $Z$ is a rational variety. Since $(\deg \tau)$ is odd and the order of $\alpha$ is 2, we conclude that $\restr{\alpha}{Z}=0$, and therefore also $\restr{\tau^*\alpha}{Z'}=0$. For the last term, $\langle z'',\tau^*\alpha \rangle = 0$ since by \cref{thm:QuadricUnramified}, $\tau^*\alpha$ restricted to any subvariety of $\tau^{-1} X_1^{sing}$ is zero.

So in \eqref{eq:DoDY'} the pairing of $\tau^*\alpha$ with the left hand side is $\tau^*\alpha \neq 0$, but the pairing with the right hand side is 0, a contradiction. Therefore, $X_p$ cannot admit a decomposition of the diagonal.
\end{proof}	

Using this we can apply \cref{prop:ZSpecialization} to get the main result of the paper:

\begin{theorem}
  Let $R=\Z$ or $R=\F_p[t]$ with field of fractions $K$. In the first case let $p,q \geq 5$ be distinct primes and set $u=p,v=q$, and in the second case let $u=t,v=(t-1)$. Let $X$ be the smooth complete intersection in $\P^6_{K}$ defined by the intersection $Q \cap C$ for
\begin{equation*}
	\label{eq:Quadric3}
	Q = V\left( u(\sum_{i=0}^6 x_i^2) + v(x_3x_6-x_4x_5)\right) 
\end{equation*}
\begin{equation*}
	\label{eq:Cubic3}
	\begin{split}
		\mathscr{C} = &V\left( u(\sum_{i=0}^6 x_i^3) + v(x_0^2x_5 + x_1^2x_4 + x_2^2x_6 + x_3(x_5^2+x_4^2+x_3^2 -2x_3(x_6 + x_5 + x_4))) \right)
	\end{split}
\end{equation*}
Then $X$ is not stably rational.
\end{theorem}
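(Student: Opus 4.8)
The plan is to derive the theorem by contradiction, reducing stable rationality of $X$ to the existence of a decomposition of the diagonal on the singular special fibre and then invoking \cref{lem:NoDoD}. Concretely, suppose for contradiction that $X$ is stably rational over $K$. Stable rationality is preserved under base change, so $X_{\overline{K}}$ is stably rational as well; by the lemma relating stable rationality to decompositions of the diagonal, $X_{\overline{K}}$ then admits a decomposition of the diagonal. This is exactly the hypothesis appearing in \cref{prop:ZSpecialization}.

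Next I would set up the integral model and identify its fibres. Localising $R$ at $\mathfrak{p}$ (the ideal $(p)$ or $(t)$) produces the DVR $R_{\mathfrak{p}}$ with fraction field $K$ and residue field $\F_p$, and $\mathscr{X} = \mathscr{Q} \cap \mathscr{C} \to \Spec R_{\mathfrak{p}}$ has generic fibre $X$. The one small verification is that the special fibre is precisely the $X_p = Q_p \cap C_p$ of \eqref{eq:Qquadric} and \eqref{eq:Qqubic}: reducing the defining equations modulo $\mathfrak{p}$ kills the coefficient $u$ (which equals $p$ or $t$, hence lies in $\mathfrak{p}$), while the coefficient $v$ (equal to $q$ or $t-1$) reduces to a unit in $\F_p$; dividing through by this unit leaves exactly the equations for $Q_p$ and $C_p$. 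With the geometric generic fibre admitting a decomposition of the diagonal, \cref{prop:ZSpecialization} yields that the geometric special fibre $X_{p,\overline{\F_p}}$ also admits a decomposition of the diagonal.

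Finally I would invoke \cref{lem:NoDoD}, which asserts that $X_p$ admits no decomposition of the diagonal and whose proof already establishes this over $k = \overline{\F_p}$. This contradicts the conclusion of the previous step, so the assumption that $X$ is stably rational must be false.

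All of the genuine mathematical content sits in \cref{lem:NoDoD} — the survival of the Hassett--Pirutka--Tschinkel unramified class on the component $X_1$, the rationality of the double locus $Z$, and the use of odd-degree alterations together with Schreieder's \cref{thm:QuadricUnramified} to control the singular locus. Against that, the present statement is a formal assembly. The only point requiring care, and hence the ``main obstacle'' in this last step, is checking that \cref{prop:ZSpecialization} genuinely applies: that $\mathscr{X} \to \Spec R_{\mathfrak{p}}$ is a scheme whose generic and special fibres are as claimed (in particular the identification of the special fibre with $X_p$ after clearing the unit $v$), so that the specialization map on $\CH_0$ transports the decomposition of the diagonal from the geometric generic fibre down to $X_{p,\overline{\F_p}}$.
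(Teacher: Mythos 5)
Your proposal is correct and follows essentially the same route as the paper: the paper's own proof is exactly the two-step combination of \cref{prop:ZSpecialization} and \cref{lem:NoDoD}, with all the real content residing in the latter. Your additional check that reducing the equations modulo $\mathfrak{p}$ kills $u$ and turns $v$ into a unit, so that the special fibre is precisely $X_p = Q_p \cap C_p$, is a detail the paper establishes when it first introduces $X_p$, and it is worth making explicit as you do.
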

\begin{proof}
	By \cref{lem:NoDoD}, $X$ specializes to a variety $Y_{\F_p}$ that does not admit a decomposition of the diagonal. But by \cref{prop:ZSpecialization} this can only happen if $X$ is not geometrically stably rational.
\end{proof}

\printbibliography
\typeout{get arXiv to do 4 passes: Label(s) may have changed. Rerun}
\end{document}